\theoremstyle{plain}
\newtheorem{theorem}{Theorem}
\newtheorem*{mainthm}{Main Theorem}
\theoremstyle{definition}
\newtheorem{proposition}{Proposition}
\newtheorem{observation}[proposition]{Observation}
\newtheorem{comments}[proposition]{Discussion}
\newtheorem{corollary}[proposition]{Corollary}
\newtheorem{lemma}[proposition]{Lemma}
\newtheorem{definition}[proposition]{Definition}
\newtheorem{context}[proposition]{Context}
\theoremstyle{remark}
\newtheorem*{claim}{Claim}
\DeclareMathOperator{\ext}{ext}
\DeclareMathOperator{\otp}{otp}
\newcommand{\pone}{{\footnotesize{\sf Empty{ }}}}
\newcommand{\ptwo}{{\footnotesize{\sf Non-empty{ }}}}
\DeclareMathOperator{\ord}{On}
\DeclareMathOperator{\NS}{NS}
\newcommand{\sk}{\vskip.05in}
\newcommand{\pre}[2]{\vphantom{B}^{#2}#1}
\newcommand{\restr}{\!\upharpoonright\!}
\newcommand{\forces}{\Vdash}
\newcommand{\oneforces}[1]{\Vdash_{\!  {#1}}}
\DeclareMathOperator{\rk}{rk}
\newcommand{\lex}{\mathrm{lex}}
\numberwithin{equation}{section}
\numberwithin{proposition}{section}
\begin{document}
\title{Galvin's Conjecture and Weakly Precipitous Ideals}
\begin{abstract}
We investigate a combinatorial game on $\omega_1$ and show that mild large cardinal assumptions imply that every normal ideal on $\omega_1$ satisfies a weak version of precipitousness.  As an application,  we show that that the Raghavan-Todor\v{c}evi\'{c} proof~\cite{dilipstevo} of a longstanding conjecture of Galvin (done assuming the existence of a Woodin cardinal) can be pushed through under much weaker large cardinal assumptions.
\end{abstract}
\keywords{Ramsey theory, partition relations, large cardinals}
\subjclass[2010]{03E02, 03E55}
\author{Todd Eisworth}
\email{eisworth@ohio.edu}
\thispagestyle{empty}
\thanks{This research was partly supported by NSF grant DMS-2400200. The author also thanks Tanmay Inamdar for pointing out an error in a previously circulated version of this manuscript.}
\maketitle

\section{Introduction}

\subsection{Motivation} This paper grows out of a recent result in Ramsey Theory due to Raghavan and Todor\v{c}evi\'{c}~\cite{dilipstevo}. From the existence of a Woodin cardinal, they establish a partition relation for uncountable sets of reals first conjectured by Galvin in the 1970s and prove
\begin{quote}
    If $X$ is an uncountable set of reals and we color the (un-ordered) pairs from $X$ with finitely many colors,  then $X$ contains a subset homeomorphic to the rationals on which at most two colors appear.
\end{quote}
We will refer to the above statement as {\em Galvin's Conjecture}, although in Baumgartner's~\cite{baum} the question is formulated only for the special case $X=\mathbb{R}$.

The value  ``two'' in the phrase ``at most two values'' is critical: a classical construction of Sierpi\'{n}ski~\cite{sierpinski} provides a coloring $c:[\mathbb{R}]^2\rightarrow \{0, 1\}$ that takes on both colors on any subset of $\mathbb{R}$ containing a subset order-isomorphic to the integers. Similarly, the restriction to uncountable sets of reals is necessary as well, as Baumgartner showed that there is a coloring of $[\mathbb{Q}]^2$ with countably many colors that takes on all values on any subset of $\mathbb{Q}$ homeomorphic to~$\mathbb{Q}$. In contrast, Galvin showed earlier in unpublished work that for any coloring of $[\mathbb{Q}]^2$ with finitely many colors, we can find a set of rationals that is order-isomorphic (though not necessarily homeomorphic) to $\mathbb{Q}$ on which the coloring assumes at most two values. Thus, Galvin's Conjecture holds if we replace ``homeomorphic to $\mathbb{Q}$'' with the weaker requirement ``order-isomorphic to $\mathbb{Q}$'' , and Baumgartner's work shows that there is potentially a big difference between the two statements.

For more general topological spaces, unpublished work of Todor\v{c}evi\'{c} and Weiss shows that the result of Baumgartner mentioned above can be extended to the broader class of $\sigma$-discrete metric spaces:  if $X$ is a $\sigma$-discrete metric space, then there is a function $c:[X]^2\rightarrow\omega$ that takes on every value on any subset of $X$ homeomorphic to $\mathbb{Q}$.  The authors of~\cite{dilipstevo} were also able to extend their positive answer to Galvin's Conjecture to a much more extensive (and essentially optimal) class of topological spaces as long as there are enough large cardinals around.
\begin{theorem}[Raghavan-Todor\v{c}evi\'{c}~\cite{dilipstevo}]
If there is a proper class of Woodin cardinals (or a single strongly compact cardinal) then the following statements are equivalent for a metrizable space $X$
\begin{itemize}
    \item $X$ is not $\sigma$-discrete.
    \sk
    \item Given $c:[X]^2\rightarrow l$ with $l<\omega$ there is a $Y\subseteq X$ homeomorphic to $\mathbb{Q}$ on which $c$ assumes at most two values.
\end{itemize}
\end{theorem}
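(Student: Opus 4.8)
Write (1) for ``$X$ is not $\sigma$-discrete'' and (2) for the partition property; I would prove the two implications separately, the large cardinal hypothesis entering only in (1)$\Rightarrow$(2).

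For $(2)\Rightarrow(1)$ I would argue the contrapositive. Suppose $X$ is $\sigma$-discrete, say $X=\bigcup_{n<\omega}X_n$ with each $X_n$ discrete, and apply the Todor\v{c}evi\'{c}--Weiss coloring $d:[X]^2\to\omega$ that takes on every value on every subspace of $X$ homeomorphic to $\mathbb{Q}$. Collapsing the range --- set $c(\{a,b\})=d(\{a,b\})$ if $d(\{a,b\})\le 1$ and $c(\{a,b\})=2$ otherwise --- yields $c:[X]^2\to 3$ under which every copy of $\mathbb{Q}$ in $X$ receives all three colors, so none receives at most two and (2) fails. This uses no large cardinals. (As soft preliminaries I would also note that scattered metrizable spaces are $\sigma$-discrete --- a Cantor--Bendixson argument grouping the level sets by the radius that witnesses isolation --- so that a non-$\sigma$-discrete metrizable space always has a crowded subspace, and that for separable metrizable $X$ ``$\sigma$-discrete'' just means ``countable'', so that (1)$\Leftrightarrow$(2) genuinely subsumes Galvin's Conjecture for uncountable sets of reals.)

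For $(1)\Rightarrow(2)$, fix a non-$\sigma$-discrete metrizable $X$ and a coloring $c:[X]^2\to l$ with $l<\omega$; I must find $Y\cong\mathbb{Q}$ in $X$ on which $c$ takes at most two values. The first step is a soft structural reduction: using the structure theory of non-$\sigma$-discrete metrizable spaces from~\cite{dilipstevo}, build inside $X$ a \emph{$\mathbb{Q}$-tree}, i.e.\ a family $\langle F_s : s\in S\rangle$ of nonempty subsets of $X$ indexed by a tree $S$ whose distinguished (say, eventually constant) branches form a copy of $\mathbb{Q}$, where the immediate successors $F_{s^\frown i}$ are pairwise disjoint subsets of $F_s$; each $F_s$ is again non-$\sigma$-discrete --- this clause is exactly where (1) is used, since it keeps the recursion from stalling --- and $\operatorname{diam}(F_{s^\frown i})\to 0$ with the $F_{s^\frown i}$ accumulating to $F_s$, all arranged so that an appropriate transversal $x_s\in F_s$ carves out a copy of $\mathbb{Q}$ inside $X$ along the distinguished branches. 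The coloring $c$ then descends, modulo the transversal, to a finite coloring of pairs of nodes of $S$, and the problem becomes: find a ``$\mathbb{Q}$-subtree'' of $S$ --- still obeying all of these clauses, in particular still non-$\sigma$-discrete at every node --- on which the induced coloring takes at most two values.

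Here is where I would use the hypothesis, and where I expect the main obstacle to lie. A proper class of Woodin cardinals gives enough generic absoluteness that, after an appropriate collapse, the nonstationary ideal $\NS_{\omega_1}$ (or a normal ideal tailored to the construction) becomes saturated, hence precipitous, so that forcing with its quotient algebra produces a well-founded generic elementary embedding $j:V\to M$ with $\crit(j)=\omega_1$. I would code the $\mathbb{Q}$-tree and its induced coloring as a structure on $\omega_1$, apply $j$, and use elementarity of $j$ together with the absoluteness supplied by the Woodins to pull back --- by a genericity / master-condition argument carried out in $V$ --- a $\mathbb{Q}$-subtree on which the coloring is $\le 2$-valued, using an oscillation-type stabilization to pin down which two colors survive; extracting a transversal then gives $Y$. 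Under the alternative hypothesis of a strongly compact cardinal $\kappa$ I would instead use the genuine ultrapower embedding coming from a fine $\kappa$-complete ultrafilter, with $\kappa$ in the role of $\omega_1$, followed by a reflection from $\kappa$ down to $\omega_1$. The whole difficulty --- and the entire content of the large cardinal hypothesis --- is making this last step go through: getting the generic-embedding/oscillation argument to bottom out at two colors and controlling the interaction between the forcing that produces $j$ and the combinatorics of the $\mathbb{Q}$-tree. This is exactly the ingredient the present paper sets out to re-derive from the much weaker statement that every normal ideal on $\omega_1$ is weakly precipitous.
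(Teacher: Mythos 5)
First, a point of orientation: the paper does not prove this theorem. It is quoted verbatim from Raghavan--Todor\v{c}evi\'{c}~\cite{dilipstevo} as motivation, and the paper goes on to prove only the special case of an uncountable set of reals (the Main Theorem), under weaker hypotheses, via weakly precipitous families of ideals. So the comparison below is against the paper's proof of that special case, which is the only argument actually carried out here. Your $(2)\Rightarrow(1)$ direction is fine as far as it goes: the Todor\v{c}evi\'{c}--Weiss coloring of a $\sigma$-discrete metric space, collapsed to three colors, does exactly what you say, and no large cardinals are needed.

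The gap is in $(1)\Rightarrow(2)$, and it is not a small one: you never produce the two colors. Your sketch defers the entire combinatorial content to ``an oscillation-type stabilization to pin down which two colors survive,'' but that is a placeholder, not an argument, and it points in the wrong direction. In both \cite{dilipstevo} and this paper, the pair $\langle i,j\rangle$ is fixed \emph{before} the recursion begins, by a finite pigeonhole on saturated pairs: for any two positive sets (with respect to a normal ideal) there is \emph{some} pair of colors for which they are weakly $\langle i,j\rangle$-saturated, and by refining through all finitely many color pairs one obtains a fully $\langle i,j\rangle$-saturated pair (Lemma~\ref{satlemma}(3) and the density argument following it). The color $i$ then governs all lexicographically increasing pairs in the eventual copy of $\mathbb{Q}$ and $j$ all decreasing pairs --- a Sierpi\'nski-type dichotomy --- and maintaining this requires carrying, at every leaf of the finite approximating trees, an ideal and a filter-large set of candidates, with saturation of the \emph{pairs of ideals} (Lemma~\ref{idealsat}) guaranteeing that each newly chosen point is suitably connected to every other leaf's candidate set. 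The second thing your sketch omits is why the recursion survives $\omega$ steps: at each node one must choose a point that is simultaneously a topological limit of an infinite sequence of positive sets of the correct colors (an $\langle i,j\rangle$-winner in the paper's terminology, Lemma~\ref{winner}). Producing such points is exactly what the well-foundedness of the generic ultrapower --- repackaged here as \ptwo having no losing strategy in $\Game(\mathbb{J})$ --- buys you: it forces $\bigcap_n A_n\neq\emptyset$ for a cleverly played run of the game, and that intersection point is the winner. Saying ``Woodin cardinals make $\NS_{\omega_1}$ saturated, hence precipitous, apply $j$ and pull back by a master-condition argument'' names the hypotheses without engaging either of these two mechanisms; as written, the proof of $(1)\Rightarrow(2)$ has no content beyond the (correct) identification of the general shape of the construction.
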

Under the same large cardinal assumptions, they show that the second statement in the above theorem holds for the broader class of regular spaces with a point-countable base that are not left-separated.

\subsection{New Results}

It is natural to ask about the role of large cardinals in their result.   Their proof implicitly relies on the the existence of a generic elementary embedding with  well-founded target model, so it is natural to ask if one can make do with such an embedding whose target model is known only to have a sufficient long well-founded initial segment because such embeddings exist under much weaker assumptions.    We show this is possible by connecting their proof back to some old ideas of Shelah~\cite{111}.   What we do here is show that under weak large cardinals assumptions that normal ideals on $\omega_1$ automatically possess a weak form of precipitousness that is defined in terms of a two-player game, and then show how their construction can be modified to work with this game. The point is that our game is played with objects of small rank rather than with conditions in a large notion of forcing, and this allows us to reach the needed conclusion from weaker assumptions than their proof requires.

Note that Inamdar~\cite{inamdar} has recently announced that the Raghavan-Todor\v{c}evi\'{c} Theorem can be obtained in {\sf ZFC}, so this particular use of our techniques is potentially less interesting than when the work was originally done in the summer of~2023. In light of this development, we will limit our attention to Galvin's original question concerning uncountable sets of reals rather than obtaining the most general result. This allows us to demonstrate how our approach works while minimizing the background required of a potential reader, and to focus the exposition of the theorem in its most concrete setting. Thus, we present a complete proof of the following:

\begin{mainthm}
\label{mainresult}
If there is a Ramsey cardinal (or more generally, a generic elementary embedding with critical point $\omega_1$ that is well-founded out to the image of $\beth_2(\omega_1)^+)$)  then for any uncountable set of reals $X$ and any $c:[X]^2\rightarrow l$ with $l<\omega$, there is a set $Y\subseteq X$ homeomorphic to $\mathbb{Q}$ for which the range of $c\restr [Y]^2$ contains at most two colors.
\end{mainthm}

\section{Ideals, Filters, and Generic Elementary Embeddings}

We assume the reader has a basic familiarity with normal filters and ideals on~$\omega_1$. If $J$ is an ideal on a set $X$, then we let $J^*$ denote the dual filter, and let $J^+$ denote the collection of $J$-positive subsets of $X$. If $A$ is a $J$-positive subset of $X$, then we define the restriction of $J$ to $A$ as
\begin{equation}
    J\restr A :=\{B\subseteq X: A\cap B\in J\}.
\end{equation}
Note that $J\restr A$ is an ideal on $X$, and $A$ is in the filter dual to $J\restr A$.

We will use phrases such as ``for $J$-almost every $x\in X$'' to mean that the set of such $x$ is in $J^*$, and similarly ``for a $J$-positive set of $x\in X$'' means that the set of such $x$ is in $J^+$.  We also abbreviate these statements using quantifiers, so for example if $\phi$ is some formula then
\begin{equation*}
    (\forall^J x\in X)\phi(x) \Longleftrightarrow \{x\in X: \phi(x)\}\in J^*,
\end{equation*}
and
\begin{equation*}
    (\exists^J x\in X)\phi(x) \Longleftrightarrow\{x\in X:\phi(x)\}\notin J.
\end{equation*}

\subsection{Generic elementary embeddings}

There is a vast literature concerning generic elementary embeddings in set theory, with Foreman's article~\cite{mf} serving as a comprehensive resource for much of the material.  Our arguments require only a vestigial piece of this theory, and we sketch what we need below.

Suppose $\mathbb{P}$ is a notion of forcing, and let $G$ be a generic subset of $\mathbb{P}$.  In the extension $V[G]$, the subsets of $\omega_1^V$ from the ground model are a Boolean algebra
\begin{equation}
    \mathcal{P}(\omega_1)^V = \{ A\subseteq \omega_1^V: A\in V\}.
\end{equation}
Note that the ground model's $\omega_1$ may be countable in the extension, but the Boolean algebra $\mathcal{P}(\omega_1)^V$ still makes sense as a subalgebra of $\mathcal{P}(\omega_1^V)$.

Next, assume that $\dot D$ is a $\mathbb{P}$-name for an ultrafilter on $\mathcal{P}(\omega_1)^V$, and let $D$ be the interpretation of $\dot D$ in $V[G]$.  Given functions $f$ and $g$ with domain $\omega_1^V$ from the ground model, we can check whether they agree on a set in $D$, and since $D$ measures any subset of $\omega_1^V$ from the ground model this induces an equivalence relation on such functions.  We let $f/D$ denote the equivalence of class of $f$.

Similarly, given two functions $f$ and $g$ from $V$ that map $\omega_1^V$ to the ordinals, we can compare them modulo $D$ and declare
\begin{equation}
    f\leq_D g \Longleftrightarrow\{\alpha<\omega_1: f(\alpha)\leq g(\alpha)\}\in D.
\end{equation}
Since $D$ is an ultrafilter on $\mathcal{P}(\omega_1)^V$, the relation is a linear pre-order of such functions, and it induces a linear ordering of the corresponding equivalence classes.

What will be important for us is guaranteeing that this linear ordering obtained above from the name $\dot D$ has a sufficiently long well-ordered initial segment, and this is where large cardinal assumptions are useful.  The following notation of Shelah~\cite{256} captures what we need.

\begin{definition}
Suppose $\mathbb{P}$ is a notion of forcing, and let $\dot D$ be a $\mathbb{P}$-name for an ultrafilter on the Boolean algebra $\mathcal{P}(\omega_1)^V$.
\begin{enumerate}
    \item $t = (\mathbb{P}, \dot D)$ is {\em pre-nice} if $\mathbb{P}$ has a strongest condition $1_{\mathbb{P}}$, and  for each $p\in \mathbb{P}$,
    \begin{equation}
        J^t_p:=\{A\subseteq\omega_1: p\forces \check A\notin \dot D\}
    \end{equation}
    is a normal ideal on $\omega_1$.\footnote{The ideal $J^t_p$ is {\em the hopeless ideal conditioned on $p$} in terminology introduced by Foreman, as it consists of those sets that have been locked out of $\dot D$ by the condition $p$. }
    \item Given a cardinal $\kappa$, we say that $t = (\mathbb{P},\dot D)$ is {\em nice to $\kappa$} if $t$ is pre-nice and
    \begin{equation}
        \oneforces{\mathbb{P}} (\pre{\kappa}{\omega_1})^V\text{ is pre-wellordered by }\leq_{\dot D}.
    \end{equation}
    \sk
    \item Given a function $f:\omega_1\rightarrow\ord$ we let $\ext(f)$ (the {\em extension } of $f$) be a $\mathbb{P}$-name for the set
    \begin{equation}
        \{g/\dot D:  g\in (\pre{\ord}{\omega_1})^V\wedge g<_{\dot D} f\}
    \end{equation}
    in the generic extension.
\end{enumerate}
\end{definition}

For set-theorists, if we use $\dot D$ to generate an elementary embedding $j:V\rightarrow \pre{V}{\omega_1}/\dot D$ in $V[G]$ then $f/\dot D$ will be a (possibly ill-founded) ordinal in the target model, and $\ext(f)$ is the collection of predecessors of $f$ in the corresponding order.

\begin{proposition}
Suppose $\mathbb{P}$ is a notion of forcing and $\dot D$ is a $\mathbb{P}$-name for an ultrafilter on the Boolean algebra $\mathcal{P}(\omega_1)^V$.
\begin{enumerate}
    \item Given a $p\in \mathbb{P}$, a set $A$ is $J_p$-positive if and only if there is a $q\leq p$ such that $q\forces \check{A}\in\dot D$.
    \sk
    \item $(\mathbb{P}, \dot D)$ is pre-nice if and only $\oneforces{\mathbb{P}}\text{``$\dot D$ is a $V$-normal ultrafilter on $\mathcal{P}(\omega_1)^V$''}$, that is, any regressive $f:\omega_1\rightarrow\omega_1$ in the ground model is forced (by every condition) to be constant on a set in $\dot D$.
    \sk
    \item $(\mathbb{P},\dot D)$ is nice to $\kappa$ if and only if for any $f:\omega_1\rightarrow\kappa$ in the ground model,
    \begin{equation*}
    \oneforces{\mathbb{P}}\text{``$\ext(f)$ is well-ordered by $<_{\dot D}$''}.
    \end{equation*}
    \item If $(\mathbb{P},\dot D)$ is nice to $\kappa$ and $f:\omega_1\rightarrow\kappa$ in the ground model, then the the set of conditions that decide the order-type of $\ext(f)$ is dense in $\mathbb{P}$.
\end{enumerate}
\end{proposition}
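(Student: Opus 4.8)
The plan is to read each clause as a dictionary entry between conditions-and-names below and statements true in $V[G]$ above, leaning throughout on the forcing fact that $\neg(p\forces\varphi)$ holds iff some $q\leq p$ forces $\neg\varphi$, together with the hypothesis that $\dot D$ is forced to be an ultrafilter on $\mathcal{P}(\omega_1)^V$. Part (1) is then immediate: $A\notin J_p$ unwinds to $p\not\forces\check A\notin\dot D$, which by the quoted fact is exactly the assertion that some $q\leq p$ forces $\check A\in\dot D$.

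For part (2) I would verify the defining clauses of ``normal ideal'' for each $J_p$ one at a time against forcing statements about $\dot D$. Downward closure, closure under finite unions, and properness of $J_p$ fall out of $\dot D$ being forced to be a proper filter, and countable completeness of $J_p$ comes out of an argument of the same flavor as the main clause (applying the hypothesis to the least-index function $\alpha\mapsto\min\{n:\alpha\in A_n\}$ for a given sequence $\langle A_n:n<\omega\rangle$ from $J_p$). The substantive equivalence is that $J_p$ is closed under diagonal unions for every $p\in\mathbb{P}$ iff every condition forces every ground-model regressive $f\colon\omega_1\to\omega_1$ to be constant on a set in $\dot D$. For one direction, if some $q$ forces a ground-model regressive $f$ to be constant on no set in $\dot D$, then every ground-model set on which $f$ is constant lies in $J_q$ — in particular all of the fibres $A_\beta:=f^{-1}\{\beta\}$ and the singleton $\{0\}$ — yet $\nabla_{\beta<\omega_1}A_\beta=\omega_1\setminus\{0\}$ since $\alpha\in A_{f(\alpha)}$ and $f(\alpha)<\alpha$ for $\alpha>0$, so closure of $J_q$ under diagonal unions would put $\omega_1\in J_q$, contradicting properness. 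For the other direction, granting the regressivity condition, fix $p$ and a sequence $\langle A_\beta:\beta<\omega_1\rangle$ from $J_p$, and suppose toward a contradiction that $q\leq p$ forces $\nabla_{\beta<\omega_1}\check A_\beta\in\dot D$; then the ground-model function $g$ sending $\alpha$ to the least $\beta<\alpha$ with $\alpha\in A_\beta$ (and to $0$ if there is none) is regressive, so passing to a generic through $q$ there are a ground-model ordinal $\beta^{\ast}$ and a set $S\in D$ with $g\restr S\equiv\beta^{\ast}$; intersecting $S$ with $\nabla_{\beta<\omega_1}A_\beta\in D$ puts $A_{\beta^{\ast}}$ in $D$, contradicting $p\forces\check A_{\beta^{\ast}}\notin\dot D$. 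Hence $\nabla_{\beta<\omega_1}A_\beta\in J_p$.

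Parts (3) and (4) I would treat under the standing assumption that $t$ is pre-nice, so that ``nice to $\kappa$'' is meaningful. For (3), observe first that for $f\colon\omega_1\to\kappa$ in $V$ any $g<_{\dot D}f$ agrees modulo $\dot D$ with a ground-model function into $\kappa$ (replace $g(\alpha)$ by $0$ wherever $g(\alpha)\geq\kappa$), so $\oneforces{\mathbb{P}}$ that $\ext(f)$ is precisely the set of $<_{\dot D}$-predecessors of $f/\dot D$ inside $(\pre{\kappa}{\omega_1})^V/\dot D$, i.e.\ an initial segment of $\leq_{\dot D}$ restricted to $(\pre{\kappa}{\omega_1})^V$. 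Thus if that pre-order is forced to be a pre-wellorder then every $\ext(f)$ is forced well-ordered; conversely, if every such $\ext(f)$ is forced well-ordered, then in $V[G]$ there is no sequence $\langle f_n:n<\omega\rangle$ from $(\pre{\kappa}{\omega_1})^V$ with $f_{n+1}<_{\dot D}f_n$ for all $n$ (its tail would be an infinite $<_{\dot D}$-descending subset of $\ext(f_0)$), and since $\leq_{\dot D}$ is already a linear pre-order this — using choice in $V[G]$ — gives that $\leq_{\dot D}$ pre-wellorders $(\pre{\kappa}{\omega_1})^V$. For (4): by (3) it is forced that $\ext(f)$ is well-ordered by $<_{\dot D}$, so there is a $\mathbb{P}$-name $\dot\gamma$ forced to equal $\otp(\ext(f),<_{\dot D})$ and forced to name an ordinal; since the possible values of such a name form a set, the conditions deciding its value are dense, so given $p$ we may extend to $q\leq p$ with $q\forces\dot\gamma=\check\gamma$ for a genuine ordinal $\gamma$, and such $q$ decides the order-type of $\ext(f)$.

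The genuine content is concentrated in the equivalence of part (2) — everything else is an application of the forcing theorem. Its mechanism is the pairing of a regressive function with the partition of $\omega_1$ into its fibres and, in the reverse direction, the choice function selecting a witness to membership in a diagonal union; the step I expect to take the most care is handling the value $\beta^{\ast}$, which is not decided by $q$ and is pinned down only after passing to a generic, so that the contradiction has to be extracted inside $V[G]$. The other point worth flagging, in (3), is that a witness to ill-foundedness of $\leq_{\dot D}$ need not belong to $V$ — which is exactly why the hypothesis there is phrased in terms of $\ext(f)$ as computed in $V[G]$ rather than about any ground-model configuration.
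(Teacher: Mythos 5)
Your proof is correct and is exactly the routine unwinding of the forcing relation that the paper has in mind --- its own proof consists of the single line ``All of these are easy using properties of the forcing relation,'' so your write-up simply supplies the standard details (the fibre/diagonal-union pairing for (2), the reduction of $\ext(f)$ to an initial segment of $(\pre{\kappa}{\omega_1})^V/\dot D$ for (3), and density of deciding conditions for (4)). The only caveat, inherited from the paper's own phrasing rather than introduced by you, is that the regressivity condition alone does not force $\dot D$ to be nonprincipal, so ``normal ideal'' in (2) must be read as not requiring $J_p$ to contain all singletons.
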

\begin{proof}
All of these are easy using properties of the forcing relation.
\end{proof}

\begin{comments}
For those with a more solid grounding in this material,  if $(\mathbb{P},\dot D)$ is nice to $\kappa$ then forcing with $\mathbb{P}$ adds a generic elementary embedding with critical point $\omega_1$ and a target model that is well-founded up to the image of $\kappa$.
If we identify the well-founded portion of the target model with its transitive collapse, then any function mapping $\omega_1$ to $\kappa$ in the ground model will map via the embedding to an actual ordinal, and the set of conditions that decide which ordinal this is will be dense in~$\mathbb{P}$.
\end{comments}

\section{A game from an embedding}

The Raghavan-Todor\v{c}evi\'c solution to Galvin's conjecture relies on playing a game with conditions in the countable stationary tower forcing $\mathbb{Q}_{<\delta}$ with $\delta$ a Woodin cardinal. The property of the game they use relies on the fact that this forcing adds a generic elementary embedding with critical point $\omega_1$ and a well-founded target model. In this section, we develop a game that is strong enough to allow their argument to be pushed through, but weak enough so that the needed properties hold in the presence of a Ramsey cardinal rather than requiring a Woodin.

\subsection{The game}

The game we want to use is a variation on an idea that has appeared several places in the literature. What we do here is to implement the {\em weakly precipitous } game of Jech~\cite{jech}, except we make use of a parameter.  The parameters we use are just families $\mathbb{J}$ of normal ideals on $\omega_1$ satisfying the following two conditions:
\begin{itemize}
    \item $\mathbb{J}$ has a $\subseteq$-minimal element $\min(\mathbb{J})$, and
    \sk
    \item for each $J\in\mathbb{J}$ and $J$-positive set $A$, there is an ideal in $\mathbb{J}$ extending $J\restr A$.
\end{itemize}
We will (temporarily) refer to such a collection $\mathbb{J}$ as a ``suitable parameter'' or just ``parameter'', and for each such parameter
$\mathbb{J}$ we will have an associated game $\Game(\mathbb{J})$. These parameters form the 2nd level of a hierarchy of objects used by Shelah in his study of generalizations of the Galvin-Hajnal theorem in cardinal arithmetic, and the collection of suitable parameters is denoted simply $OB_2$ in~\cite{256} and~\cite{cardarith}.  Donder and Levinksi use similar objects --- $\kappa$-plain collections of filters --- in their work~\cite{donder}.

Clearly the collection of ALL normal ideals on $\omega_1$ is a suitable parameter, and if $t = (\mathbb{P},\dot D)$ is pre-nice then the collection of ideals $\{J^t_p:p\in\mathbb{P}\}$ is a suitable parameter as well. Finally, note that if $\mathbb{J}$ is a suitable parameter and $J\in\mathbb{J}$, then $\{I\in\mathbb{J}: J\subseteq I\}$ is also a suitable parameter.

\begin{definition}
Given a suitable parameter $\mathbb{J}$, the game $\Game(\mathbb{J})$ is a contest of length~$\omega$ between two players \pone and \ptwo\!\!. At a stage $n$, \pone will be selecting a subset $A_n$ of $\omega_1$ and \ptwo will respond by choosing an ideal $J_n$ from $\mathbb{J}$.  The rules are as follows:
\begin{enumerate}
    \item For bookkeeping purposes, set $J_{-1}$ to be $\min(\mathbb{J})$.
    \sk
    \item Given $J_n$, \pone will select a $J_n$-positive set $A_{n+1}$, and \ptwo will respond by choosing some $J_{n+1}\in\mathbb{J}$ that extends $J_n\restr A_{n+1}$.
    \sk
    \item After $\omega$ stages, \pone is declared the winner if $\bigcap_{n<\omega}A_n=\emptyset$.
\end{enumerate}
\end{definition}

\begin{definition}
We say a that a family $\mathbb{J}$ of normal ideals on $\omega_1$ is {\em weakly precipitous} if $\mathbb{J}$ is a suitable parameter and \pone does not have a winning strategy in the game $\Game(\min(\mathbb{J}))$.  We say that $\mathbb{J}$ is {\em everywhere weakly precipitous} if $\mathbb{J}$ if $\{I\in\mathbb{J}:I\supseteq J\}$ is weakly precipitous for each $J\in\mathbb{J}$, and we refer to the corresponding version of the game simply as ``{\em playing $\Game(\mathbb{J})$ beyond $J$}'' rather than introducing more notation.
\end{definition}

\begin{comments}
If $J$ is a normal ideal on $\omega_1$ and we take $\mathbb{J}$ to be the parameter consisting of all normal ideals on $\omega_1$ that extend $J$, then we arrive at the {\em weakly precipitous} game of Jech~\cite{jech}, and he calls a normal ideal on $\omega_1$  {\em weakly precipitous} if \pone does not have a winning strategy in the corresponding game and uses an argument of Shelah to show that the non-stationary ideal on $\omega_1$ is weakly precipitous in the presence of a Ramsey cardinal.   Note that if $\mathbb{J}$ is a weakly precipitous family of ideals, then the ideal $\min(\mathbb{J})$ will be weakly precipitous in this sense.

The game is also related to the  $\mathbb{Q}_{<\delta}$-game used by Raghavan and Todor\v{c}evi\'c, and the {\em precipitous game} of Galvin characterizing precipitous ideals.  The difference is that these games involve making moves with conditions in a notion of forcing, while $\Game(\mathbb{J})$ works with the ``projections'' of forcing conditions on objects of small rank.  This is why we can get away with partially well-founded embeddings in our argument: the game is being played with small objects, but \ptwo can use the (potentially quite large) notion of forcing to defeat any purported winning strategy for \pone\!.

This connection to precipitous ideals shows us that it is possible for the ideal $\min(\mathbb{J})$ to be weakly precipitous while the family $\mathbb{J}$ itself is not.  For example, if $J$ is a normal ideal on $\omega_1$ and we define $\mathbb{J}$ to be the collection of ideals of the form $J\restr A$ with $A\in J^+$, then $\mathbb{J}$ is a suitable parameter with $\min(\mathbb{J})=J$, but the collection $\mathbb{J}$ is weakly precipitous if and only if the ideal $J$ is precipitous: for this choice of $\mathbb{J}$, the weakly precipitous game $\Game(\mathbb{J})$ reduces to the game used by Galvin in his characterization of precipitous ideals. As we shall shortly see, weakly precipitous ideals and families of normal ideals on $\omega_1$ exist in the presence of Ramsey cardinals, while it is well-known that precipitous ideals are equiconsistent with the existence of measurable cardinals.
\end{comments}

\begin{theorem}
\label{gamethm}
If $t=(\mathbb{P},\dot D)$ is nice to $\kappa = \beth_2(\omega_1)^+=(2^{2^{\aleph_1}})^+$, then $\mathbb{J}^t=\{J^t_p:p\in\mathbb{P}\}$ is an everywhere weakly precipitous family of ideals.
\end{theorem}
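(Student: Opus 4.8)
The plan is to fix $q\in\mathbb P$, abbreviate $J:=J^t_q$, and suppose toward a contradiction that \pone has a winning strategy $\sigma$ in the game $\Game(\mathbb J^t)$ beyond $J$; we may assume $\sigma$ prescribes only legal moves, and a routine reduction (replace each move of $\sigma$ by its intersection with the running intersection of previous moves) lets us also assume $A_{n+1}\subseteq A_n$ throughout. For each $\xi<\omega_1$ let $T_\xi$ be the tree, ordered by end-extension, whose nodes are the $\sigma$-consistent legal positions all of whose played sets contain $\xi$. A branch through $T_\xi$ is a full play consistent with $\sigma$ in which $\xi\in\bigcap_n A_n$, contradicting that $\sigma$ wins for \pone; so every $T_\xi$ is well-founded. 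A position is a finite sequence of subsets of $\omega_1$ (of which there are at most $2^{\aleph_1}$) interleaved with normal ideals on $\omega_1$ (of which there are at most $2^{2^{\aleph_1}}$), so $|T_\xi|\le 2^{2^{\aleph_1}}$ and the rank $\rho(\xi)$ of the root of $T_\xi$ is an ordinal below $\beth_2(\omega_1)^+=\kappa$. Thus $\rho\colon\omega_1\to\kappa$, and $\sigma$, $\rho$, $\langle T_\xi:\xi<\omega_1\rangle$ and $\mathbb J^t$ all lie in $H(\kappa)$. The combinatorial fact I shall use is the recursion built into the ranks: if $s\in T_\xi$ ends in a move $A$ by \pone and \ptwo has a legal reply $I\in\mathbb J^t$ for which $\xi\in\sigma(s{}^\frown I)$, then $s{}^\frown\langle I,\sigma(s{}^\frown I)\rangle$ sits strictly lower in $T_\xi$ than $s$.

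Next I pass to a generic ultrapower. Put $A_0:=\sigma(\langle J\rangle)$; it is $J$-positive, so by part~(1) of the Proposition fix $q^*\le q$ with $q^*\forces\check A_0\in\dot D$. Force with $\mathbb P$ below $q^*$, set $D:=\dot D[G]$, and let $j\colon V\to M=\ult(V,D)$ be the induced embedding. Since $t$ is nice to $\kappa$, $M$ is well-founded out to $j(\kappa)$, and I identify that portion of $M$ with its transitive collapse. Then $\crit(j)=\omega_1^V$; $\dot D$ being forced $V$-normal gives $[\id]_D=\omega_1^V$; and $A_0\in D$ gives $\omega_1^V\in j(A_0)=j(\sigma)(\langle j(J)\rangle)$. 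Crucially, because $\sigma$ and the trees $T_\xi$ are coded below $\kappa$, their images $j(\sigma)$ and $j(T_\xi)$ are coded below $j(\kappa)$ in $M$ and the associated $M$-ranks are honest; in particular $[\rho]_D=\rho^{j(\sigma)}(\omega_1^V)=:\gamma_0$ is a genuine ordinal $<j(\kappa)$. This is precisely why the threshold in the hypothesis is $\kappa=\beth_2(\omega_1)^+$.

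The contradiction I am steering toward is this: \ptwo will play against the image of $\sigma$ so as to keep a ``live seed'' inside every set \pone plays while driving the $\rho$-rank of that seed strictly down at each round; by the recursion of the first paragraph, $\omega$ successful rounds yield a strictly descending $\omega$-chain of genuine ordinals below an image of $\kappa$, which is absurd. Everything therefore reduces to a single step: \emph{at a position $s$ (in the ambient model) ending in a move of \pone that contains the current seed, \ptwo has a legal reply $I$ in the relevant copy of $\mathbb J^t$ for which the seed again lies in $\sigma$'s forced response to $s{}^\frown I$.}

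This last step is the crux, and it is where the forcing $\mathbb P$ — not $H(\kappa)$ — does the real work. The obvious attempt is for \ptwo to play $J^t_p$ where $p$ extends the previous condition and forces the current set into $\dot D$ (part~(1) of the Proposition supplies such $p$, and the routine ultrafilter computation shows $J^t_p$ extends the required restriction, so the move is legal); but $\sigma$'s response is computed from the enlarged position and may fall outside $D$, so the seed can be shaken loose. The fix is to iterate: whenever the seed would be lost, pass to a further generic ultrapower $k$ of the current model — legitimate since that model believes the image of $t$ is nice — taken below a condition that, again via part~(1) of the Proposition applied inside that model, forces the new ultrafilter to contain $\sigma$'s response, so that the new model's $\omega_1$ (the new seed) lands inside that response; $V$-normality of the ultrafilter, hence its $\sigma$-completeness on ground-model sequences, keeps the earlier sets around the new seed modulo the relevant ideals, and all the rank data passes through $k$. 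Iterating $\omega$ times, with a direct limit where required, produces the promised strictly descending $\omega$-chain of genuine ordinals below the final image of $\kappa$. Verifying that every rank met along the iteration remains genuine, and that the ranks really do strictly decrease from one stage to the next, is the technical heart of the argument; the ``everywhere'' clause needs no extra work, since the entire argument was carried out for the game beyond an arbitrary $J^t_q$.
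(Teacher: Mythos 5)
Your setup — the well-founded trees $T_\xi$ of strategy-consistent positions keeping $\xi$ alive, the cardinality count giving ranks below $\kappa=\beth_2(\omega_1)^+$, and the observation that advancing the play strictly decreases rank — matches the paper exactly, and you have correctly located the crux: if you fix a single generic ultrafilter $D$ up front, the strategy's responses need not land in $D$, so the ``seed'' $[\id]_D$ can escape. But your proposed fix, iterating generic ultrapowers of the (only partially well-founded) target models, is a genuine gap, not a deferred technicality. The model $M$ is ill-founded above $j(\kappa)$, so forcing over $M$ with $j(\mathbb{P})$ and forming a further ultrapower is not a legitimate operation without substantial justification; even granting it, a direct limit of such embeddings has no reason to remain well-founded out to the image of $\kappa$, which is precisely where your final descending chain is supposed to live; and the moves produced in the iterated models belong to $j(\mathbb{J}^t)$, $k(j(\mathbb{J}^t))$, etc., not to $\mathbb{J}^t$, so it is unclear how the construction refutes the assumption that $\sigma$ is winning in $V$. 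You concede that verifying all of this is ``the technical heart,'' and it is exactly the part that does not go through.

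The paper's proof dissolves the difficulty by never fixing a generic and never forming an ultrapower. \ptwo{} plays in $V$: he maintains a descending chain of conditions $p_0\geq p_1\geq\cdots$ below $p$ (where $J=J_p$), answering \pone's move $A_{n+1}$ with the ideal $J_{p_{n+1}}$, where $p_{n+1}$ both forces $\check A_{n+1}\in\dot D$ (available by part (1) of the Proposition, making the move legal) and \emph{decides} the order-type $\delta_{n+1}$ of $\ext(\rho_{n+1})$, with $\rho_{n+1}(\alpha)$ the rank of the current position in $T_\alpha$ for $\alpha\in A_{n+1}$. Niceness to $\kappa$ is used only through part (4) of the Proposition: the decided order-types $\delta_n$ are honest ordinals of $V$. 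Since $\rho_{n+1}<\rho_n$ pointwise on $A_{n+1}$ and $p_{n+1}\forces\check A_{n+1}\in\dot D$, the condition $p_{n+1}$ forces $\rho_{n+1}<_{\dot D}\rho_n$, hence forces $\ext(\rho_{n+1})$ to be a proper initial segment of $\ext(\rho_n)$, giving $\delta_{n+1}<\delta_n$ outright in $V$ — a descending $\omega$-sequence of genuine ordinals with no iteration, no direct limit, and no generic object ever chosen. If you replace your iteration scheme with this forcing-relation argument, the rest of your write-up stands.
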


\begin{proof}
Given $J\in\mathbb{J}=\mathbb{J}^t$, assume by way of contradiction that \pone has a winning strategy in the version of $\Game(\mathbb{J})$ played past $J$.  For each $\alpha<\omega_1$, let $T_\alpha$ consist of all finite sequences $\sigma$ of odd length that comprise finite partial plays $\langle A_0, F_0, \dots, A_{n-1}, F_{n-1}, A_n\rangle$ in the game where \pone is using her winning strategy, but for which $\alpha$ has not yet been eliminated, that is, with $\alpha\in A_n$.  Since \pone is using a winning strategy, we know $T_\alpha$ is a well-founded tree, and thus there is a corresponding ranking function $\rho_\alpha$ of the elements of $T_\alpha$, defined in standard fashion via the recursion
\begin{equation}
    \rho_\alpha(\sigma) = \sup\{\rho_\alpha(\eta)+1: \text{$\eta$ an immediate successor of $\sigma$ in $T_\alpha$}\}.
\end{equation}
Note that there are at most $\beth_2(\omega_1)$ moves available to each player in total. This implies $|T_\alpha|=\beth_2(\omega_1)$, and since the range of our rank function is an initial segment of the ordinals,  we conclude
\begin{equation}
    \rk_\alpha(\sigma)<\beth_2(\omega_1)^+=\kappa
\end{equation}
for any $\alpha\in A_n$ and $\sigma\in T_\alpha$. This is a critical observation for our argument and explains the appearance of $\beth_2(\omega_1)^+$ in the statement of Theorem~\ref{gamethm}.

Fix $p\in\mathbb{P}$ with $J=J_p$.  The second player will work to defeat the winning strategy by building a decreasing sequence $\langle p_n:n<\omega\rangle$ of conditions in $\mathbb{P}$ below~$p$, arranging that his move $J_n$ will be the ideal $J_{p_n}\in\mathbb{J}$.  Assume we have been playing a game in which \pone is using her strategy, and we have arrived at the sequence $\sigma=\langle A_0, F_0, \dots, A_n, F_n, A_{n+1}\rangle$.   Since \pone is using her winning strategy, for each $\alpha\in A_{n+1}$ the sequence $\sigma$ is in the tree $T_\alpha$ and therefore has been assigned corresponding rank less than $\kappa$.
Putting this together gives us a single function function $\rho_{n+1}: \omega_1\rightarrow \kappa$ defined by
\begin{equation}
\rho_{n+1}(\alpha)=
\begin{cases}
    \rk_\alpha(\sigma)   &\text{if $\alpha\in A_{n+1}$, and}\\
    0   &\text{otherwise.}
\end{cases}
\end{equation}
 \ptwo now extends $p_n$ to a condition $q\in\mathbb{P}$ such that $J_q$ extends $J_n\restr A_{n+1}$,  and then chooses $p_{n+1}\leq q$ deciding the order-type of $\ext(\rho_{n+1})$.   Finally, he sets $J_{n+1} = J_{p_{n+1}}$ and the game continues.

We show this leads to a contradiction. Assume that \ptwo plays as described above, so at stage $n$ there is an ordinal $\delta_n$ such that
\begin{equation}
    p_n\forces \otp(\ext(\rho_n))=\delta_n.
\end{equation}
For each $\alpha\in A_{n+1}$ we know $\rho_{n+1}(\alpha)<\rho_n(\alpha)$ because by advancing our game we move further down the tree $T_\alpha$.  Since
\begin{equation}
    p_{n+1}\forces A_{n+1}\in \dot D,
\end{equation}
it follows that $p_{n+1}$ forces $\rho_{n+1}<_{\dot D}\rho_n$, and therefore $\delta_{n+1}<\delta_n$.  This leads to an immediate contradiction.
\end{proof}

We finish this section by noting that in the presence of a Ramsey cardinal, every normal ideal on $\omega_1$ is weakly precipitous. This result is implicit in Chapter IV of~\cite{cardarith}, although it is somewhat hidden by the generality in which Shelah works.  We assume more background knowledge than needed in the rest of the paper, but the proof consists of chaining together several standard facts from~\cite{mf} and~\cite{paul} about generalized stationary sets.   All  we will need to know about Ramsey cardinals is that they are {\em completely J\'onnson}:
 if $\lambda$ is a Ramsey cardinal and $S$ is a stationary subset of $[H(\theta)]^\omega$ for some sufficiently large regular $\theta<\lambda$, then the set
\begin{equation}
    T:=\{X\subseteq V_\lambda: T\cap H(\theta)\in S\text{ and }|T\cap \lambda|=\lambda\}
\end{equation}
is stationary (in the generalized sense) in $[V_\lambda]^\lambda$. Larson~\cite{paul} contains details about the relevance of completely J\'onsson cardinals to stationary tower forcing, and for a proof (Remark 2.3.3) that Ramsey cardinals satisfy this property.

\begin{theorem}
\label{needramsey}
If there is a Ramsey cardinal, then the set $\mathbb{J}$ of all normal ideals on~$\omega_1$ is everywhere weakly precipitous. In particular, if there is a Ramsey cardinal then every normal ideal on $\omega_1$ is weakly precipitous.
\end{theorem}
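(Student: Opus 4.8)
The plan is to derive Theorem~\ref{needramsey} from Theorem~\ref{gamethm} together with standard facts about the generic elementary embedding added by the countable stationary tower below a completely J\'onsson cardinal. Two things are needed: (i) for each normal ideal $J$ on $\omega_1$, a pair $t=(\mathbb{P},\dot D)$ that is nice to $\kappa=\beth_2(\omega_1)^+$ and has $J^t_{1_{\mathbb{P}}}=J$; and (ii) a bridge from the conclusion of Theorem~\ref{gamethm}, which concerns the particular family $\mathbb{J}^t=\{J^t_p:p\in\mathbb{P}\}$, to the family of \emph{all} normal ideals on $\omega_1$.

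I would first dispose of (ii) with a monotonicity observation: if $\mathbb{J}_1\subseteq\mathbb{J}_2$ are suitable parameters with $\min(\mathbb{J}_1)=\min(\mathbb{J}_2)$ and $\mathbb{J}_1$ is weakly precipitous, then so is $\mathbb{J}_2$. The point is that a strategy $\sigma$ for \pone in $\Game(\mathbb{J}_2)$ restricts to a strategy $\sigma'$ for \pone in $\Game(\mathbb{J}_1)$: since $\mathbb{J}_1\subseteq\mathbb{J}_2$ and the minima agree, every position of $\Game(\mathbb{J}_1)$ is also a position of $\Game(\mathbb{J}_2)$, so let $\sigma'$ copy $\sigma$ there; legality of \pone's move depends only on the last ideal played by \ptwo, and by suitability of $\mathbb{J}_1$ the second player can always answer inside $\mathbb{J}_1$, so $\sigma'$ is total, and every play following $\sigma'$ is a play following $\sigma$ (with \ptwo's ideals drawn from $\mathbb{J}_1\subseteq\mathbb{J}_2$, hence legal in $\Game(\mathbb{J}_2)$). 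Thus if $\sigma$ were winning so would $\sigma'$ be. Granting (i), Theorem~\ref{gamethm} makes $\mathbb{J}^t$ everywhere weakly precipitous, so in particular $\mathbb{J}^t$ played beyond its minimum $J^t_{1_{\mathbb{P}}}=J$ is weakly precipitous; and $\mathbb{J}^t$ is a suitable parameter with minimum $J$ contained in the suitable parameter of all normal ideals on $\omega_1$ extending $J$, which also has minimum $J$. The monotonicity observation then promotes this to weak precipitousness of the whole family of normal ideals extending $J$. As $J$ was arbitrary, the family of all normal ideals on $\omega_1$ is everywhere weakly precipitous, and the ``in particular'' clause is exactly the instance of this at each $J$ (per the Discussion above, ``$J$ is weakly precipitous'' just says \pone has no winning strategy in the game associated to the family of all normal ideals extending $J$).

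For (i), fix a Ramsey cardinal $\lambda$ and a normal ideal $J$ on $\omega_1$; then $\lambda$ is completely J\'onsson in the sense displayed above (\cite{paul}, Remark~2.3.3). Pick $\eta<\lambda$ large enough that $J\in V_\eta$, and let $\mathcal{N}_J\subseteq[V_\eta]^\omega$ be the stationary set of countable $X\prec(V_\eta,\in,J,\dots)$ with $\sup(X\cap\omega_1)\notin\bigcup\{A\in J:A\in X\}$ --- stationary precisely because $J$ is a proper normal ideal. Let $\mathbb{P}:=\mathbb{Q}_{<\lambda}\restr\mathcal{N}_J$ be the countable stationary tower below $\lambda$ restricted below the condition $\mathcal{N}_J$, with largest condition $1_{\mathbb{P}}=\mathcal{N}_J$, and let $\dot D$ name the $V$-normal ultrafilter $\{A\subseteq\omega_1:\omega_1^V\in j_{\dot G}(A)\}$ derived from the generic ultrapower $j_{\dot G}\colon V\to M$. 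Chaining standard stationary tower facts (\cite{mf}): $t=(\mathbb{P},\dot D)$ is pre-nice --- the embedding has critical point $\omega_1$ and $\dot D$ is forced $V$-normal --- and, via the correspondence between normal ideals on $\omega_1$ and stationary subsets of $[V_\eta]^\omega$, a set $A\subseteq\omega_1$ is locked out of $\dot D$ by $\mathcal{N}_J$ exactly when $A\in J$, i.e.\ $J^t_{1_{\mathbb{P}}}=J$. Finally, since $\lambda$ is completely J\'onsson, the generic ultrapower $M$ is well-founded (\cite{paul}, \cite{mf}); a fortiori $M$ is well-founded out to the image of $\kappa=\beth_2(\omega_1)^+$, so for every $f\colon\omega_1\to\kappa$ in $V$ the name $\ext(f)$ is forced to be well-ordered by $<_{\dot D}$ --- precisely what it means for $(\mathbb{P},\dot D)$ to be nice to $\kappa$. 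This completes (i), and with it the proof.

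I expect the main obstacle to be the interface with \cite{paul} in the last step: one must check that ``completely J\'onsson'', in the reflection form stated in the excerpt, really does force the (restricted) stationary tower generic ultrapower to be well-founded --- the single genuinely non-routine ingredient, treated in \cite{paul} --- and that passing to $\mathbb{Q}_{<\lambda}\restr\mathcal{N}_J$ neither destroys well-foundedness nor disturbs the computation $J^t_{1_{\mathbb{P}}}=J$; note we cannot restrict the tower any further, since that would enlarge the hopeless ideal at the top and spoil $\min(\mathbb{J}^t)=J$. A secondary point needing care is pinning down the definition of $\mathcal{N}_J$ --- and of which structure $X$ must be elementary in --- exactly enough that ``$J$-positive'' and ``forceable into $\dot D$ below $\mathcal{N}_J$'' coincide on the nose; this is routine but must invoke the standard ideal/stationary-set dictionary correctly.
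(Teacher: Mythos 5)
Your step (ii) --- the monotonicity observation that weak precipitousness of a suitable parameter $\mathbb{J}_1\subseteq\mathbb{J}_2$ with the same minimum transfers upward to $\mathbb{J}_2$ --- is correct and in the right direction (a winning strategy for the first player against the larger family restricts to one against the smaller family), and it makes explicit a bridge that the paper leaves implicit. The problem is in step (i). You assert that because $\lambda$ is completely J\'onsson, the generic ultrapower of the countable stationary tower $\mathbb{Q}_{<\lambda}\restr\mathcal{N}_J$ is well-founded, and you cite \cite{paul} and \cite{mf} for this. That claim is not available here: well-foundedness of the countable tower's generic ultrapower is the signature consequence of $\lambda$ being a \emph{Woodin} cardinal, and avoiding exactly that hypothesis is the entire point of this theorem. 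Complete J\'onssonness of a Ramsey cardinal gives you no control over ill-foundedness of the full tower ultrapower, and you cannot get even the weaker conclusion (well-founded out to the image of $\beth_2(\omega_1)^+$) from the countable tower by this route, because Magidor's argument --- the only mechanism on offer for partial well-foundedness --- needs models $X$ with $|X\cap\lambda|=\lambda$ while $X\cap\omega_1<\omega_1$, and the conditions in the countable tower consist of countable models. Your closing paragraph correctly identifies this as ``the single genuinely non-routine ingredient,'' but it is precisely the ingredient that fails, so the proposal as written has a genuine gap rather than a deferrable verification.

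The repair is the paper's actual route. Start, as you do, with Burke's representation of $J$ via the stationary set $S_J\subseteq[H(\theta)]^\omega$ of countable $M\prec H(\theta)$ with $M\cap\omega_1\notin\bigcup(M\cap J)$. The completely J\'onsson property is then used not to conclude anything about a tower, but to \emph{inflate} $S_J$ to the stationary set $T_J$ of $X\subseteq V_\lambda$ with $X\cap H(\theta)\in S_J$ and $|X\cap\lambda|=\lambda$, and one forces with the single layer $\mathbb{P}=\mathcal{P}(T_J)/\NS$ rather than with a tower. Pre-niceness and $J=\min(\mathbb{J}^t)$ come from the projection $X\mapsto X\cap\omega_1$ exactly as in your ideal/stationary-set dictionary. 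Niceness to $\lambda$ is then Magidor's argument: for $f:\omega_1\rightarrow\lambda$ set $\hat f(X)$ to be the $f(X\cap\omega_1)^{\mathrm{th}}$ element of $X\cap\lambda$ (defined and in $X$ because $|X\cap\lambda|=\lambda$ and $X\cap\omega_1$ is countable); normality forces $\hat f$ to be constant on a set in the generic, say with value $\delta_f$, and $f<_{\dot D}g$ forces $\delta_f<\delta_g$, so $\ext(f)$ is forced well-ordered. That yields a pair nice to $\lambda\gg\beth_2(\omega_1)^+$ with minimum $J$, and your step (ii) finishes the proof.
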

\begin{proof}
Suppose $J$ is a normal ideal on $\omega_1$, and let $\lambda$ be a Ramsey cardinal. It suffices to show that there is a pair $t=(\mathbb{P},\dot D)$ that is nice to $\lambda$ (which is certainly much larger than $\beth_2(\omega_1)^+$) such that $J = \min(\mathbb{J}^t)$. Working in $V_\lambda$, we apply a theorem due to Burke~\cite{burke} to represent $J$ as the canonical projection of the non-stationary ideal.  Following the presentation of Burke's theorem from Foreman~\cite{mf}, if $\theta$ is a sufficiently large regular cardinal, and we define $S_J$ to be the collection of countably elementary submodels of $H(\theta)$ containing $J$ for which
\begin{equation}
    M\cap\omega_1\notin\bigcup(M\cap J),
\end{equation}
then $S_J$ is a stationary subset of $[H(\theta)]^\omega$ and the canonical projection sending a model $M$ to the ordinal $M\cap\omega_1$ witnesses that $J$ is the Rudin-Keisler projection of the non-stationary ideal restricted to $S_J$.

Now we use the fact that $\lambda$ is completely J\'onsson to ``inflate'' the elements of $S_J$ to models of size $\lambda$: the set
\begin{equation}
    T_J:=\{X\subseteq V_\lambda:  X\cap H(\theta)\in S_J\text{ and }|X\cap\lambda|=\lambda\}
\end{equation}
is stationary in $[V_\lambda]^\lambda$.  Now we forces with let $\mathbb{P} = \mathcal{P}(T_J)/\NS$, and let $G$ be a generic subset of~$\mathbb{P}$. Standard arguments tell us that $G$ will project to a $V$-ultrafilter $D$ on~$\omega_1$, via the correspondence
\begin{equation}
    A \in G\Longleftrightarrow \{X\cap\omega_1: X\in A\}\in D.
\end{equation}
If we let $\dot D$ be a name for this object, then $t=(\mathbb{P},\dot D)$ is a pre-nice pair with $J$ equal to $\min(\mathbb{J}^t)$.

The argument that this pre-nice pair is nice to $\lambda$ is a venerable one, attributed to Magidor in~\cite{jech} and~\cite{111}. The fact that $|X\cap\lambda|=\lambda$ and $X\cap\omega_1<\omega_1$ for $X\in T_J$ is critical.  Given a function $f:\omega_1\rightarrow\lambda$, define a function $\hat{f}$ on $T_J$ by setting
\begin{equation}
    \hat{f}(X) = \text{the $f(X\cap\omega_1)^{\mathrm{th}}$ element of the increasing enumeration of $X\cap\lambda$}.
\end{equation}
Note that $\hat{f}(X)$ is an element of $X$ as $|X\cap\lambda|=\lambda$ and $X\cap\omega_1<\omega_1$. A density argument using the normality of the non-stationary ideal on $T_J$ tells us that $\hat{f}$ will be constant (say with value $\delta_f<\lambda$) on a set in $G$ for any $f:\omega_1\rightarrow\lambda$ in the ground model. Given two such functions $f$ and $g$ from $V$, if $f<_D g$ in $V[G]$, then $\hat{f}<_G\hat{g}$ and so $\delta_f<\delta_g$. From this we conclude that $t$ is nice to $\lambda$.
\end{proof}

\section{Saturated Pairs and Winning Ideals}

Our next move is to bring in a coloring and see how some of the arguments from~\cite{dilipstevo} can be extended to work with the framework from the last section.  We remind the reader of our promise to focus only on the case where we partition an uncountable set of reals. Thus, we fix the following context for our discussion.

\begin{context}
Fix a coloring $c:[\omega_1]^2\rightarrow l$ with $l<\omega$, and assume $\mathcal{T}$ is a topology on $\omega_1$ such that $X=(\omega_1,\mathcal{T})$ homeomorphic to a subset of $\mathbb{R}$.
\end{context}

Our initial steps are straightforward modifications of some definitions from~\cite{dilipstevo}.

\begin{definition}
Suppose $J$ is an ideal on $\omega_1$.
\begin{enumerate}
    \item Given  $\alpha<\omega_1$, a $J$-positive subset $B$ of $\omega_1$, and a color $i<l$, we say that $\alpha$ is $i$-large in $B$ with respect to $J$ if the set of $\beta\in B$ with $c(\alpha,\beta)=i$ is $J$-positive.
    \sk
    \item  A pair $\langle A, B\rangle$ of $J$-positive sets is said to be {\em weakly $\langle i, j\rangle$-saturated over $J$} if the set of $\alpha\in A$ that are $i$-large in $B$ with respect to $J$ is $J$-positive, and the set of $\beta\in B$ that are $j$-large in $A$ with respect to $J$ is $J$-positive, that is, if
\begin{equation}
\label{left}
    (\exists^J\alpha\in A)(\exists^J \beta\in B)\left[c(\alpha,\beta)=i\right],
\end{equation}
and
\begin{equation}
\label{right}
    (\exists^J \beta\in B)(\exists^J \alpha\in A)\left[c(\alpha,\beta)=j\right].
\end{equation}
\sk
\item A pair $\langle A, B\rangle$ of $J$-positive sets is {\em$\langle i, j\rangle$-saturated over $J$} if for any $J$-positive $C\subseteq A$ and $D\subseteq B$, the pair $\langle C, D\rangle$ is weakly $\langle i, j\rangle$-saturated over $J$.  We may say that $\langle C, D\rangle$ is a $J$-positive refinement of $\langle A, B\rangle$.
\end{enumerate}
\end{definition}

\begin{lemma}[Facts about weak saturation]
\label{basic}
Let $J$ be an ideal on $\omega_1$, and assume $A$ and $B$ are $J$-positive sets.
\begin{enumerate}
    \item There is a pair of colors $\langle i, j\rangle$ such that $\langle A, B\rangle$ is weakly $\langle i, j\rangle$-saturated over $J$.
    \sk
    \item If $\langle A, B\rangle$ is NOT weakly $\langle i, j\rangle$-saturated over $J$, then the same is true for any pair of $J$-positive sets $\langle C, D\rangle$ with $C\subseteq A$ and $D\subseteq B$.
\end{enumerate}
\end{lemma}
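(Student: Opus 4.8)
The plan is to derive both parts from a simple pigeonhole/finiteness argument on the color set $l$. For part (1), I would fix $J$-positive sets $A,B$ and observe that for each $\alpha\in A$, the set $\{\beta\in B: c(\alpha,\beta)=i\}$ ranges over $i<l$ and these sets partition $B$; since $B$ is $J$-positive and $l$ is finite, and an ideal cannot contain a finite union of sets all in $J$ whose union is $J$-positive, for each $\alpha\in A$ there is some color $i(\alpha)<l$ for which $\alpha$ is $i(\alpha)$-large in $B$ with respect to $J$. Now $\alpha\mapsto i(\alpha)$ is a function from $A$ into $l$, so by the same finiteness-versus-positivity principle there is a single color $i<l$ for which $A_i:=\{\alpha\in A:\alpha\text{ is }i\text{-large in }B\}$ is $J$-positive. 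This gives \eqref{left}. I would then rerun the identical argument with the roles of $A$ and $B$ swapped to obtain a color $j<l$ making the analog of \eqref{right} hold; the key point is that the choice of $j$ is made \emph{after} $i$ is fixed but imposes no compatibility constraint with $i$, so there is no obstruction. Combining, $\langle A,B\rangle$ is weakly $\langle i,j\rangle$-saturated over $J$.

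For part (2), I would argue by unwinding the definition: if $C\subseteq A$ and $D\subseteq B$ are $J$-positive, then for any $\alpha$, the set $\{\beta\in D: c(\alpha,\beta)=i\}$ is contained in $\{\beta\in B: c(\alpha,\beta)=i\}$, so $\alpha$ being $i$-large in $D$ implies $\alpha$ is $i$-large in $B$; likewise $\{\alpha\in C:\alpha\text{ is }i\text{-large in }D\}\subseteq\{\alpha\in A:\alpha\text{ is }i\text{-large in }B\}$. Hence if the latter set is in $J$ (i.e.\ \eqref{left} fails for $\langle A,B\rangle$), the former is in $J$ as well, so \eqref{left} fails for $\langle C,D\rangle$; symmetrically for \eqref{right}. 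Therefore failure of weak $\langle i,j\rangle$-saturation is inherited by $J$-positive refinements, which is exactly the monotonicity statement. This also immediately re-proves that the notion of $\langle i,j\rangle$-saturation from the preceding definition is well-posed.

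I do not expect a genuine obstacle here: the only thing to be careful about is the repeated appeal to the elementary fact that if $B = \bigcup_{i<l} B_i$ with $B$ being $J$-positive and $l$ finite, then some $B_i$ is $J$-positive — which is immediate from $J$ being closed under finite unions. I would state this finiteness principle once at the start of the proof and invoke it twice in part (1). The swapped-roles step in part (1) is truly symmetric and needs no separate work. Part (2) is pure set-containment bookkeeping, monotone in each argument separately, so it requires no choices at all.
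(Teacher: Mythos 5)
The paper leaves this lemma to the reader, and your pigeonhole argument is exactly the standard proof that is intended: finitely many color classes cannot all lie in $J$ while covering a $J$-positive set, applied once over $B$ and once over $A$ (with the two colors chosen independently, as the definition permits), plus pure containment monotonicity for part (2). Your argument is correct; the only invisible point is that the color classes of a fixed $\alpha$ partition $B\setminus\{\alpha\}$ rather than $B$ itself, which is harmless since every ideal in play contains singletons.
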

\begin{proof}
Left to reader.
\end{proof}

\begin{lemma}[Facts about saturation]
\label{satlemma}
Under the same assumptions as the previous proposition:
\begin{enumerate}
        \item If $\langle A, B\rangle$ is $\langle i, j\rangle$-saturated over $J$, then so is any $J$-positive refinement of $\langle A, B\rangle$.
        \sk
        \item If $\langle A, B\rangle$ is $\langle i, j\rangle$-saturated over $J$, then $J$-almost every $\alpha\in A$ is $i$-large in~$B$, and $J$-almost every $\beta\in B$ is $j$-large in $A$.
        \sk
        \item For some choice of $\langle i, j\rangle$ there is a pair of $\langle i, j\rangle$-saturated sets over $J$.
\end{enumerate}
\end{lemma}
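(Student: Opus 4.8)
The plan is to obtain parts (1) and (2) by directly unwinding the definitions, and to establish (3) by a greedy refinement process that is forced to terminate because there are only finitely many color pairs.

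For (1), let $\langle C,D\rangle$ be a $J$-positive refinement of $\langle A,B\rangle$. Any $J$-positive refinement $\langle C',D'\rangle$ of $\langle C,D\rangle$ has $C'\subseteq C\subseteq A$ and $D'\subseteq D\subseteq B$ with $C'$ and $D'$ both $J$-positive, so it is itself a $J$-positive refinement of $\langle A,B\rangle$ and hence weakly $\langle i,j\rangle$-saturated over $J$ by hypothesis; since $\langle C',D'\rangle$ was arbitrary, $\langle C,D\rangle$ is $\langle i,j\rangle$-saturated over $J$. For (2), let $A'$ be the set of $\alpha\in A$ that are not $i$-large in $B$ with respect to $J$, and suppose toward a contradiction that $A'\in J^+$. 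Then $\langle A',B\rangle$ is a $J$-positive refinement of $\langle A,B\rangle$, so by hypothesis it is weakly $\langle i,j\rangle$-saturated over $J$; in particular the set of $\alpha\in A'$ that are $i$-large in $B$ is $J$-positive, but that set is empty by the choice of $A'$, a contradiction. Hence $A'\in J$, and the assertion about $B$ follows by the symmetric argument with the roles of $A,B$ and $i,j$ interchanged.

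For (3) I would build a descending sequence of $J$-positive pairs. Put $\langle A_0,B_0\rangle=\langle A,B\rangle$. Given $\langle A_n,B_n\rangle$, apply Lemma~\ref{basic}(1) to choose a color pair $\langle i_n,j_n\rangle$ for which $\langle A_n,B_n\rangle$ is weakly $\langle i_n,j_n\rangle$-saturated over $J$. If $\langle A_n,B_n\rangle$ is in fact $\langle i_n,j_n\rangle$-saturated over $J$, stop: this pair witnesses (3). Otherwise some $J$-positive refinement $\langle A_{n+1},B_{n+1}\rangle$ of $\langle A_n,B_n\rangle$ fails to be weakly $\langle i_n,j_n\rangle$-saturated over $J$, and we continue with it.

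The key point, and the only one requiring care, is that this process must halt within $l^2$ steps. By Lemma~\ref{basic}(2), once $\langle A_{n+1},B_{n+1}\rangle$ fails to be weakly $\langle i_n,j_n\rangle$-saturated over $J$, every $J$-positive refinement of it fails as well; hence $\langle A_m,B_m\rangle$ is not weakly $\langle i_n,j_n\rangle$-saturated over $J$ for any $m>n$, so the color pair $\langle i_m,j_m\rangle$ witnessing weak saturation of $\langle A_m,B_m\rangle$ must differ from $\langle i_n,j_n\rangle$. Thus $\langle i_0,j_0\rangle,\langle i_1,j_1\rangle,\dots$ are pairwise distinct elements of the finite set $l\times l$, so the construction cannot continue past stage $l^2$; and the only way for it to stop is to have produced an $\langle i,j\rangle$-saturated pair, which gives (3). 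I do not expect a genuine obstacle here: the argument rests entirely on the downward monotonicity of the failure of weak saturation supplied by Lemma~\ref{basic}(2), together with the finiteness of the color palette.
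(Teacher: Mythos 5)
Your proposal is correct and follows essentially the same route as the paper: parts (1) and (2) by unwinding the definitions, and part (3) by iteratively refining to kill color pairs, using Lemma~\ref{basic}(2) for the permanence of failure and Lemma~\ref{basic}(1) plus the finiteness of $l\times l$ to force termination. The only cosmetic difference is that you phrase (3) as a direct greedy construction that must halt, while the paper phrases it as a contradiction obtained after exhausting all color pairs.
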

\begin{proof}
The first statement is immediate from the definition, and easily implies the second.  If (3) fails, then for any pair $\langle A, B\rangle$ of $J$-positive sets and any pair of colors $\langle i, j\rangle$, we can find a $J$-positive refinement of $\langle A, B\rangle$ that is not weakly $\langle i, j\rangle$-saturated over $J$. Since this state of affairs is inherited by any further $J$-positive refinements, by making repeated extensions and working through all pairs of colors, we arrive at a pair of $J$-positive sets that fail to be weakly $\langle i, j\rangle$-saturated for EVERY choice of $\langle i, j\rangle$. But then this contradicts part (1) of the previous lemma, and we are done.
\end{proof}

The preceding lemmas are both taken almost directly from~\cite{dilipstevo}.  We need to extend things a bit by lifting the concept of $\langle i, j\rangle$-saturation from pairs of sets to pairs of normal ideals, as in the following definition:

\begin{definition}
Let $\langle i, j\rangle$ be a pair of colors. We say that a pair $\langle J_0, J_1\rangle$ of normal ideals on $\omega_1$ is $\langle i, j\rangle$-saturated if for any normal extensions $I_0$ and $I_1$  of $J_0$ and $J_1$ respectively, we have
    \begin{align}
        (\forall^{I_0}\alpha<\omega_1)(\exists^{I_1}\beta<\omega_1)&\left[c(\alpha,\beta)=i\right]\label{eqn?}\\
        \intertext{and}
        (\forall^{I_1}\beta<\omega_1)(\exists^{I_0}\alpha<\omega_1)&\left[c(\alpha,\beta)=j\right].
    \end{align}
\end{definition}

Now the key point: we can use normality to see that $\langle i, j\rangle$-saturated pairs of sets give rise to $\langle i, j\rangle$-saturated pairs of ideals in the natural way.

\begin{lemma}
\label{idealsat}
Suppose $J$ is a normal ideal on $\omega_1$ and $\langle A, B\rangle$ is a pair of $J$-positive sets that is $\langle i, j\rangle$-saturated over $J$.  Then $\langle J\restr A, J\restr B\rangle$ is an $\langle i, j\rangle$-saturated pair of normal ideals.
\end{lemma}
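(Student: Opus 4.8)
The plan is to verify the two displayed clauses of $\langle i,j\rangle$-saturation for $\langle J\restr A,\,J\restr B\rangle$ separately, each by contradiction; the two arguments are mirror images of one another, so it suffices to describe the first. Throughout I would fix normal ideals $I_0\supseteq J\restr A$ and $I_1\supseteq J\restr B$ and, for $\alpha<\omega_1$, write $B_\alpha=\{\beta<\omega_1:c(\alpha,\beta)=i\}$. The first thing to record are two bookkeeping remarks used repeatedly: since $\omega_1\setminus A\in J\restr A$ we have $A\in I_0^*$ (and likewise $B\in I_1^*$), and since $J\restr A\subseteq I_0$ every $I_0$-positive subset of $A$ is automatically $J$-positive (and likewise for $B$ and $I_1$). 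These let me pass freely between ``$I$-positive'' and ``$J$-positive'' as long as I stay inside $A$ or inside $B$.

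Next, suppose the first clause fails for our $I_0,I_1$, i.e.\ the set $W=\{\alpha<\omega_1:B_\alpha\in I_1\}$ is $I_0$-positive. Intersecting with $A$ and invoking the remarks above, $C:=W\cap A$ is a $J$-positive subset of $A$ with $B_\alpha\in I_1$ for every $\alpha\in C$. The heart of the argument is a diagonalization: let $N:=\nabla_{\alpha\in C}B_\alpha$ (the diagonal union, i.e.\ $\{\beta:\exists\alpha\in C\cap\beta\ \text{with}\ \beta\in B_\alpha\}$), which lies in $I_1$ by normality of $I_1$, and put $D:=B\setminus N$. Then $D\in I_1^*\cap\mathcal{P}(B)$, so $D$ is $I_1$-positive and hence $J$-positive as a subset of $B$; and for each $\alpha\in C$ the set $B_\alpha\cap D$ is contained in $\alpha+1$, a bounded --- hence $J$-null --- subset of $\omega_1$. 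Thus $\langle C,D\rangle$ is a $J$-positive refinement of $\langle A,B\rangle$ for which clause~\eqref{left} fails, contradicting that $\langle A,B\rangle$ is $\langle i,j\rangle$-saturated over $J$. The second clause is then handled by the mirror argument: diagonalize over the sets $\{\alpha:c(\alpha,\beta)=j\}$ inside $I_0$ to produce a $J$-positive refinement violating clause~\eqref{right}.

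I expect the one genuinely load-bearing line to be ``$N\in I_1$, therefore $D=B\setminus N$ is $J$-positive'': this is exactly where the hypothesis $I_1\supseteq J\restr B$ (rather than merely ``$I_1$ is some normal ideal'') is used, since it is $I_1$-positivity of a subset of $B$ that upgrades to genuine $J$-positivity. Everything else is routine: that $A\in I_0^*$, that bounded subsets of $\omega_1$ lie in every normal ideal, that a diagonal union of $I_1$-small sets is $I_1$-small, and the unwinding of the $\forall^{I}/\exists^{I}$ quantifiers into statements about membership in $I$. The one small point I would spell out in the write-up is why $B_\alpha\cap D\subseteq\alpha+1$: if $\beta\in B_\alpha\cap D$ then $\beta\notin N$, so $\alpha$ cannot witness $\exists\gamma\in C\cap\beta$ with $\beta\in B_\gamma$; since $\alpha\in C$ and $\beta\in B_\alpha$, this forces $\alpha\ge\beta$.
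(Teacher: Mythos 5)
Your proof is correct and is essentially the paper's own argument: both reduce to a pair of normal extensions violating the first clause, use normality to diagonalize (you take the diagonal union of the sets $\{\beta:c(\alpha,\beta)=i\}$ inside $I_1$, the paper equivalently takes the diagonal intersection of their complements inside $J_B^*$), and then exhibit a $J$-positive refinement $\langle C,D\rangle$ of $\langle A,B\rangle$ on which weak $\langle i,j\rangle$-saturation fails because $\{\beta\in D:c(\alpha,\beta)=i\}$ is bounded for every $\alpha\in C$. The only cosmetic difference is that the paper phrases the final contradiction by extracting an $\alpha\in C$ with unboundedly many $\beta\in D$ of color $i$, whereas you directly verify the failure of clause~\eqref{left}; these are the same contradiction.
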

\begin{proof}
Suppose this fails as witnessed by ideals $J_A$ and $J_B$.  Assuming (without loss of generality) that the problem lies with (\ref{eqn?}), we can find a $J_A$-positive set $C$ such that for each $\alpha\in C$,
\begin{equation}
    B_\alpha:=\{\beta\in B: c(\alpha,\beta)\neq i\}\in J^*_B.
\end{equation}
Since $J_B$ is normal, we know the diagonal intersection $D$ of these sets $B_\alpha$ is in~$J_B^*$,~so
\begin{equation}
\label{weapon}
    \beta\in D\text{ and }\alpha\in C\cap \beta\Longrightarrow c(\alpha,\beta)\neq i.
\end{equation}
Now both $\langle C\cap A, D\cap B\rangle$ is a $J$-positive refinement of $\langle A, B\rangle$, and so forms an $\langle i, j\rangle$-saturated pair over~$J$. In particular, we can find $\alpha\in C$ such that $C_\alpha:=\{\beta\in D: c(\alpha,\beta)=i\}$ is $J$-positive.  But this implies that there are arbitrarily large $\beta\in D$ for which $c(\alpha,\beta)=i$,  which contradicts~(\ref{weapon}).
\end{proof}

\subsection{Using a generic elementary embedding}

Our next goal is to explore how our coloring interacts with a pre-nice pair~$t=(\mathbb{P},\dot D)$, and the corresponding family of ideals $\mathbb{J}^t=\{J^t_p:p\in\mathbb{P}\}$.

\begin{lemma}
Suppose $(\mathbb{P},\dot D)$ is pre-nice.  Then there is a $p\in\mathbb{P}$ and a pair of colors $\langle i, j\rangle$ such that the set of $q\in\mathbb{P}$ for which there is an $\langle i, j\rangle$-saturated pair of $J$-positive sets over $J_q$ is dense below $p$.
\end{lemma}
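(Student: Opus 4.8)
The plan is to reduce the statement to Lemma~\ref{satlemma}(3) by a pigeonhole argument over the finitely many pairs of colors. (I read the statement as asserting density of the set of $q\le p$ admitting an $\langle i,j\rangle$-saturated pair of $J_q$-positive sets over $J_q$.)

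First I would record a preliminary observation making Lemma~\ref{satlemma}(3) applicable: for every $q\in\mathbb{P}$ the ideal $J_q:=J^t_q$ is a proper normal ideal. Indeed, since $\dot D$ is forced to be an ultrafilter on $\mathcal{P}(\omega_1)^V$ and $\omega_1\in\mathcal{P}(\omega_1)^V$, we have $1_{\mathbb{P}}\forces\check{\omega_1}\in\dot D$, so $\omega_1\notin J_q$ and $J_q$-positive sets exist. Then, using Lemma~\ref{satlemma}(3), for each $q\in\mathbb{P}$ I would fix colors $i(q),j(q)<l$ together with a pair $\langle A_q,B_q\rangle$ of $J_q$-positive sets that is $\langle i(q),j(q)\rangle$-saturated over $J_q$. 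This produces a coloring $q\mapsto\langle i(q),j(q)\rangle$ of $\mathbb{P}$ by the finite set $l\times l$.

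Next I would isolate the order-theoretic fact doing the work: if $f$ maps a notion of forcing with a largest element into a finite set $F$, then there exist a condition $p$ and a value $v\in F$ such that $\{q\le p:f(q)=v\}$ is dense below $p$. I would prove this by contradiction. Enumerate $F=\{v_0,\dots,v_{k-1}\}$. If no such $p,v$ exist, then starting from $p_0=1_{\mathbb{P}}$ I recursively choose $p_{m+1}\le p_m$ below which $f$ never takes the value $v_m$ --- possible precisely because $\{q\le p_m:f(q)=v_m\}$ is not dense below $p_m$. After $k$ steps the condition $p_k$ lies below no $r$ with $f(r)\in F$, contradicting $f(p_k)\in F$.

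Applying this to the coloring $q\mapsto\langle i(q),j(q)\rangle$ yields $p\in\mathbb{P}$ and a pair $\langle i,j\rangle$ such that $\{q\le p:\langle i(q),j(q)\rangle=\langle i,j\rangle\}$ is dense below $p$; for each $q$ in this dense set the pair $\langle A_q,B_q\rangle$ witnesses that there is an $\langle i,j\rangle$-saturated pair of $J_q$-positive sets over $J_q$, which is the desired conclusion. The only step needing any care is the preliminary observation that each $J_q$ is proper (so that Lemma~\ref{satlemma}(3) actually produces saturated pairs, and so that pre-niceness is genuinely used); everything else is soft manipulation of dense sets together with a finite pigeonhole, and normality of the ideals $J_q$ plays no role in this particular argument.
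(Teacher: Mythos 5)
Your proof is correct and takes essentially the same route as the paper: both arguments rest on Lemma~\ref{satlemma}(3) together with a finite pigeonhole carried out by successive extensions through the $l^2$ color pairs, yours organized as a positive density lemma and the paper's as a direct contradiction argument. Your preliminary check that each $J_q$ is a proper ideal (so that $J_q$-positive sets exist and Lemma~\ref{satlemma}(3) applies) is a reasonable point of care that the paper leaves implicit.
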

\begin{proof}
Suppose not.  Then for any condition $p\in\mathbb{P}$ and pair of colors $\langle i, j\rangle$ we can find a $q\leq p$  so that given any $r\leq q$ there is no $\langle i, j\rangle$-saturated pair over $J_r$.  Using this and by making successive extensions running through all the finitely many possible pairs $\langle i, j\rangle$, we arrive at a condition $q$ such that $J_q$ fails to have an $\langle i, j\rangle$-saturated pair for EVERY choice of $\langle i, j\rangle$, but this contradicts the last part of Lemma~\ref{satlemma}.
\end{proof}

\begin{corollary}
If $t=(\mathbb{P},\dot D)$ is nice to the cardinal $\kappa$ then we can find another such pair $t'=(\mathbb{P}',\dot D')$ and a pair of colors $\langle i, j\rangle$ such that for each $q\in \mathbb{P}'$ there is an $\langle i, j\rangle$-saturated pair of normal ideals over $J^{t'}_q$.
\end{corollary}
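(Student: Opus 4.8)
The plan is to bootstrap the preceding lemma into a statement about every condition, using Lemma~\ref{idealsat} to pass from saturated pairs of sets to saturated pairs of ideals, together with the elementary monotonicity of the hopeless ideals. First I would apply the preceding lemma to fix a condition $p_0\in\mathbb{P}$ and a pair of colors $\langle i,j\rangle$ such that
\[
D:=\{q\le p_0:\text{there is an }\langle i,j\rangle\text{-saturated pair of }J^t_q\text{-positive sets over }J^t_q\}
\]
is dense below $p_0$. I would then define $\mathbb{P}'$ to be $\mathbb{P}$ restricted to the cone of conditions $\le p_0$, with $p_0$ as its strongest condition, and let $\dot D'$ be $\dot D$ reread as a $\mathbb{P}'$-name. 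Since $J^{t'}_q=J^t_q$ for every $q\le p_0$, these are still normal ideals, so $t'$ is pre-nice; and since $1_{\mathbb{P}}$ forces $(\pre{\kappa}{\omega_1})^V$ to be pre-wellordered by $\leq_{\dot D}$, the same statement is a fortiori forced by $p_0=1_{\mathbb{P}'}$, so $t'$ is again nice to $\kappa$. These verifications are routine uses of the forcing theorem, amounting to the remark that forcing with $\mathbb{P}$ below $p_0$ is the same as forcing with $\mathbb{P}'$.

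The substantive part is the ``for each $q$'' clause. Fix $q\in\mathbb{P}'$. Since $D$ is dense below $p_0$ it is dense below $q$, so I would choose some $r\le q$ with $r\in D$, together with an $\langle i,j\rangle$-saturated pair $\langle A,B\rangle$ of $J^t_r$-positive sets over $J^t_r$. Lemma~\ref{idealsat} converts this into an $\langle i,j\rangle$-saturated pair of normal ideals $\langle J^t_r\restr A, J^t_r\restr B\rangle$. The key observation is then that, because $r\le q$, we have $J^{t'}_q=J^t_q\subseteq J^t_r\subseteq J^t_r\restr A$ and likewise $J^{t'}_q\subseteq J^t_r\restr B$, so this pair of ideals extends $J^{t'}_q$, as required. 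If one also wants the two ideals to belong to the family $\mathbb{J}^{t'}$ itself, one can instead pass to ideals $J_s\supseteq J^t_r\restr A$ and $J_{s'}\supseteq J^t_r\restr B$ furnished by the suitability of the parameter $\mathbb{J}^{t'}$; an $\langle i,j\rangle$-saturated pair of normal ideals remains $\langle i,j\rangle$-saturated after either of its coordinates is enlarged, since the defining condition quantifies over \emph{all} normal extensions of the two coordinates, so $\langle J_s,J_{s'}\rangle$ still works.

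The point I expect to require the most care is exactly this transfer: the previous lemma only produces a \emph{dense} set $D$ of good conditions rather than all of $\mathbb{P}$, and a saturated pair of \emph{sets} over $J^t_q$ need not survive passage to a stronger condition. What rescues the argument is that a saturated pair of \emph{ideals}, unlike a saturated pair of sets, is inherited upward along any enlargement of its two coordinates, and moving from $q$ to $r\le q$ only enlarges the hopeless ideal $J^t_q\subseteq J^t_r$. Beyond that, the only thing to be checked carefully is that restricting a pre-nice (respectively, nice-to-$\kappa$) pair to a cone below a fixed condition preserves those properties, which is immediate. I do not anticipate any genuine difficulty here.
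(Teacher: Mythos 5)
There is a genuine gap, and it lies exactly where you placed your confidence: in the decision to let $\mathbb{P}'$ be the \emph{entire} cone below $p_0$ rather than the dense set $D$ itself. For a condition $q\le p_0$ that does not lie in $D$, your argument passes to some $r\le q$ in $D$ and produces the pair $\langle J^t_r\restr A, J^t_r\restr B\rangle$, which is indeed an $\langle i,j\rangle$-saturated pair of ideals extending $J^{t'}_q$. But that is not what the corollary is for. The corollary feeds directly into the definition of an $\langle i,j\rangle$-saturated \emph{family} and into Corollary~\ref{saturation}, both of which require, for every $J\in\mathbb{J}^{t'}$, a pair of \emph{$J$-positive sets} that is $\langle i,j\rangle$-saturated \emph{over $J$ itself} (this is what Lemma~\ref{winner} consumes at each round of the game, where it must shrink the pair inside $A_n\in J_n^*$ and quote Lemma~\ref{satlemma}(1)--(2) for $J_n$). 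Your construction does not supply this: the pair $\langle A,B\rangle$ is saturated over the \emph{larger} ideal $J^t_r\supseteq J^t_q$, and saturation does not transfer downward to the smaller ideal, because full saturation over $J^t_q$ quantifies over all $J^t_q$-positive refinements $\langle C,D\rangle$, including sets $C$ that lie in $J^t_r$ and about which saturation over $J^t_r$ says nothing. (Only \emph{weak} saturation transfers from $J^t_r$ to $J^t_q$.) So for $q\notin D$ the ideal $J^{t'}_q$ may simply admit no saturated pair of sets over it, and the family $\mathbb{J}^{t'}$ you build need not be $\langle i,j\rangle$-saturated in the sense used in the rest of the paper. Your closing claim that a saturated pair of ideals survives enlargement of its coordinates is correct, but it addresses the wrong invariant.

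The paper's proof makes the move you deliberately avoided: it sets $\mathbb{P}'=\{q\le p:\text{there is an }\langle i,j\rangle\text{-saturated pair of sets over }J^t_q\}$, i.e.\ the dense set itself is the new forcing. Density below $p$ is precisely what makes this harmless: forcing with $\mathbb{P}'$ is equivalent to forcing with $\mathbb{P}$ below $p$, so $\dot D$ rereads as a $\mathbb{P}'$-name, $(\mathbb{P}',\dot D')$ is still nice to $\kappa$, and $J^{t'}_q=J^t_q$ for $q\in\mathbb{P}'$; and now membership in $\mathbb{P}'$ hands you, for \emph{every} $q\in\mathbb{P}'$, a saturated pair of sets over $J^{t'}_q$, which Lemma~\ref{idealsat} converts to the required pair of ideals. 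Your verifications of pre-niceness and niceness to $\kappa$ for a restricted forcing are fine and carry over verbatim to this choice of $\mathbb{P}'$; the only repair needed is to restrict to the good conditions rather than to the whole cone.
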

\begin{proof}
Fix a condition $p\in\mathbb{P}$ and colors $\langle i, j\rangle$ as in the previous lemma, and define
\begin{equation}
    \mathbb{P}'=\{q\in\mathbb{P}: q\leq p\text{ and there is an $\langle i, j\rangle$-saturated pair over }J_{q}\}.
\end{equation}
Since $\mathbb{P}'$ is dense below $p$ in $\mathbb{P}$, forcing with $\mathbb{P}'$ is equivalent to forcing with $\mathbb{P}$ below~$p$. In particular, there is a $\mathbb{P}'$-name $\dot D'$ for the same ultrafilter adjoined by $\mathbb{P}$, and $(\mathbb{P}',\dot D')$ will be nice to $\kappa$. Given $q\in\mathbb{P}'$, the ideal $J_q$ is exactly the same whether computed using $\mathbb{P}$ or $\mathbb{P}'$, so there will be an $\langle i, j\rangle$-saturated pair of sets as required.
\end{proof}

We close this subsection with a definition and a corollary that summarize the work we have done.

\begin{definition}
A collection $\mathbb{J}$ of normal ideals on $\omega_1$ is $\langle i, j\rangle$-saturated if for any $J\in \mathbb{J}$ we can find a pair of $J$-positive sets that are $\langle i, j\rangle$ saturated over $J$.
\end{definition}

\begin{corollary}
\label{saturation}
If there is a generic elementary embedding with critical point $\omega_1$ that is well-founded out to the image of $\beth_2(\omega_1)^+$, then for any $l<\omega$ and coloring $c:[\omega_1]^2\rightarrow l$ with $l<\omega$ for some pair of colors $\langle i, j\rangle$ there is an everywhere weakly precipitous $\langle i, j\rangle$-saturated family of normal ideals on $\omega_1$.
\end{corollary}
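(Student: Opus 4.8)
The plan is to assemble ingredients already developed and then invoke Theorem~\ref{gamethm}; the only point requiring genuine care is converting the hypothesis into the vocabulary of Section~2. So the first thing I would do is observe that the assertion ``there is a generic elementary embedding $j\colon V\to N$ with $\crit(j)=\omega_1$ that is well-founded out to $j(\beth_2(\omega_1)^+)$'' amounts, by standard usage, to the existence of a forcing $\mathbb{P}$ and a $\mathbb{P}$-name for such an embedding; from the latter I would extract a pre-nice pair $t_0=(\mathbb{P},\dot D)$ that is \emph{nice to} $\kappa:=\beth_2(\omega_1)^+$. Concretely, let $\dot D$ name the induced $V$-ultrafilter on $\mathcal{P}(\omega_1)^V$, so that $A\in D$ iff $\omega_1^V\in j(A)$. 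A regressive ground-model $f\colon\omega_1\to\omega_1$ has $j(f)(\omega_1^V)<\omega_1^V$, so $f$ is forced to be constant on a set in $\dot D$; hence $\dot D$ is forced to be $V$-normal and $t_0$ is pre-nice (by the Proposition of Section~2). Moreover, for $f,g\colon\omega_1\to\kappa$ from $V$ we have $f<_D g$ iff $j(f)(\omega_1^V)<j(g)(\omega_1^V)$, a comparison of ordinals of $N$ below $j(\kappa)$; since that initial segment of $N$ is well-founded, $\leq_{\dot D}$ pre-wellorders $(\pre{\kappa}{\omega_1})^V$, i.e.\ $t_0$ is nice to $\kappa$. (The converse direction, that a pair nice to $\kappa$ produces such an embedding, is the Discussion following that Proposition, and is not needed here.)

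With $t_0$ in hand I would fix $l<\omega$ and a coloring $c\colon[\omega_1]^2\to l$. Fixing $c$ places us in the setting of the previous subsection, none of whose results involves the topology $\mathcal{T}$ of the Context, so they all apply. Feeding $t_0$ into the corollary that, from a pair nice to $\kappa$, extracts a pair $t'=(\mathbb{P}',\dot D')$ (again nice to $\kappa$) together with a pair of colors $\langle i,j\rangle$ such that every $J^{t'}_q$ with $q\in\mathbb{P}'$ carries an $\langle i,j\rangle$-saturated pair of positive sets over it, we read off directly from the definition that the family $\mathbb{J}^{t'}=\{J^{t'}_q\colon q\in\mathbb{P}'\}$ is $\langle i,j\rangle$-saturated. (By Lemma~\ref{idealsat} each such pair of sets in fact yields an honest $\langle i,j\rangle$-saturated pair of normal ideals $\langle J^{t'}_q\restr A,\ J^{t'}_q\restr B\rangle$, the form in which saturation is used later, although the present statement does not need this.)

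Finally, since $t'$ is nice to $\kappa=\beth_2(\omega_1)^+$, Theorem~\ref{gamethm} applies to $t'$ and tells us that $\mathbb{J}^{t'}$ is everywhere weakly precipitous. Thus $\mathbb{J}^{t'}$ is an everywhere weakly precipitous $\langle i,j\rangle$-saturated family of normal ideals on $\omega_1$, which is exactly the desired conclusion. The main obstacle is really just the first step: after that the argument simply recombines Lemma~\ref{idealsat}, the $t'$-extraction corollary invoked above, and Theorem~\ref{gamethm}. It is worth noting that the exponent in $\kappa=\beth_2(\omega_1)^+$ is dictated by the rank computation in the proof of Theorem~\ref{gamethm} --- there are at most $\beth_2(\omega_1)$ moves available in $\Game(\mathbb{J})$, so the ranks that arise lie below $\beth_2(\omega_1)^+$ --- which is why a lesser degree of well-foundedness would not obviously be enough.
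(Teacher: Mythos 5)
Your proposal is correct and follows exactly the route the paper intends: the corollary is stated as a summary of the preceding work, obtained by reading the embedding hypothesis as a pair $(\mathbb{P},\dot D)$ nice to $\beth_2(\omega_1)^+$, passing to the dense suborder $\mathbb{P}'$ via the earlier corollary to get the $\langle i,j\rangle$-saturation of $\mathbb{J}^{t'}$, and then invoking Theorem~\ref{gamethm} for everywhere weak precipitousness. Your added detail on why the embedding yields a pre-nice pair that is nice to $\kappa$ (via $A\in D$ iff $\omega_1^V\in j(A)$ and the well-foundedness of $N$ below $j(\kappa)$) is a correct elaboration of what the paper leaves implicit.
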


\subsection{Bringing in topology}

We turn now to adapting another idea, that of $\langle i, j\rangle$--winners, from~\cite{dilipstevo} to our context.  This is where the topology on $\omega_1$ (viewed as a subset of $\mathbb{R}$) will be used.

\begin{observation}
\label{obssmall}
    Suppose $J$ is a countably complete ideal on $\omega_1$ and $n<\omega_1$.  If $A$ is $J$-positive then there is a $J$-positive $B\subseteq A$ that is covered by an open interval of $\mathbb{R}$ of length less than $1/n$.
\end{observation}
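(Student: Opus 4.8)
The plan is to exploit the embedding of $X=(\omega_1,\mathcal{T})$ into $\mathbb{R}$ together with the countable completeness of $J$ via a pigeonhole argument; there is essentially nothing deeper going on. First I would fix a homeomorphism $h$ identifying $X$ with the subspace $h[X]\subseteq\mathbb{R}$, with the understanding that a subset of $\omega_1$ is ``covered by an open interval $I$ of $\mathbb{R}$'' when it is contained in $h^{-1}[I]$; this is just the (purely notational) convention that lets us speak of intervals of $\mathbb{R}$ while working inside $\omega_1$.

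Next I would choose a countable family $\{I_k:k<\omega\}$ of open intervals of $\mathbb{R}$, each of length strictly less than $1/n$, whose union is all of $\mathbb{R}$ --- for instance the intervals with rational endpoints of length $<1/n$, or concretely the family $\{(\tfrac{k}{2(n+1)},\tfrac{k+2}{2(n+1)}):k\in\mathbb{Z}\}$, each of which has length $\tfrac{1}{n+1}<\tfrac{1}{n}$ and which visibly covers $\mathbb{R}$. Since these intervals cover $\mathbb{R}\supseteq h[X]$, the sets $A_k:=A\cap h^{-1}[I_k]$ satisfy $A=\bigcup_{k<\omega}A_k$.

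Finally, I would invoke countable completeness: since $A$ is $J$-positive, the countable union $\bigcup_{k<\omega}A_k$ cannot lie in $J$, so some $A_k$ is $J$-positive. Taking $B$ to be this $A_k$ yields a $J$-positive subset of $A$ contained in $h^{-1}[I_k]$, i.e.\ covered by an open interval of length $<1/n$, as desired. I do not anticipate any real obstacle; the only point worth spelling out is the identification of $\omega_1$ with its image in $\mathbb{R}$, after which the conclusion is immediate from the definition of a countably complete ideal.
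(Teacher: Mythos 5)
Your proof is correct and follows exactly the same route as the paper's: cover $A$ by countably many intervals of length less than $1/n$ and use countable completeness to extract a $J$-positive piece. The extra care you take with the homeomorphic identification and the explicit choice of intervals is fine but not needed beyond what the paper's two-line argument already records.
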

\begin{proof}
Given $n$, we can cover $A$ with countably many intervals of length $<1/n$ just using the topology of $\mathbb{R}$.  Since $J$ is countably complete, one of the corresponding pieces of $A$ must be $J$-positive.
\end{proof}

\begin{lemma}
\label{winner}
Suppose $\mathbb{J}$ is an everywhere weakly precipitous $\langle i, j\rangle$-saturated family of normal ideals on $\omega_1$.  For any $J\in\mathbb{J}$, for $J$-almost every $\alpha$ we can find objects $\langle I_n:n<\omega\rangle$ and $\langle T_n:n<\omega\rangle$ such that
\begin{enumerate}
    \item $I_n$ is an extension of $J$ in $\mathbb{J}$,
    \sk
    \item $T_n\in I_n^*$,
    \sk
    \item $\langle I_k, I_n\rangle$ is $\langle i, j\rangle$-saturated if $k>n$,
    \sk
    \item $c(\alpha,\beta)=i$ for each $\beta\in\bigcup_{n<\omega}T_n$, and
    \sk
    \item every open neighborhood of $\alpha$ contains $T_n$ for all sufficiently large $n$.
\end{enumerate}
\end{lemma}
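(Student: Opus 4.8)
The plan is to run the game $\Game(\mathbb{J})$ beyond $J$, using the everywhere weak precipitousness to extract an infinite play in which \ptwo has managed to record enough combinatorial information to build the $I_n$'s and $T_n$'s. The strategy for \pone that we want to defeat is a ``diagonal'' one: at each stage \pone will insist that the sets chosen shrink in diameter (using Observation~\ref{obssmall}), that they all lie inside a fixed small neighborhood of some target point $\alpha$, and that the color $i$ is maintained. More precisely, I would have \ptwo play as follows. At stage $n$, having arrived at ideal $I_{n-1}$, \ptwo first uses $\langle i,j\rangle$-saturation of the family (passing to a suitable restriction) together with Lemma~\ref{satlemma}(2) to ensure that the set $T_{n-1}$ of points which are $i$-large is in $I_{n-1}^*$, and records $T_{n-1}$; then \ptwo picks $I_n\in\mathbb{J}$ extending $I_{n-1}\restr(\text{the relevant }i\text{-large refinement})$, arranged so that $\langle I_k,I_n\rangle$ is $\langle i,j\rangle$-saturated whenever $k>n$ (this is exactly what Lemma~\ref{idealsat} buys us: a saturated pair of sets over $I_{n-1}$ restricts to a saturated pair of ideals). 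Because $\mathbb{J}$ is everywhere weakly precipitous, \pone has no winning strategy in the game beyond $J$, so in particular she cannot win against the specific countermeasures above; running her (hypothetical) diagonal strategy against \ptwo's responses and invoking non-winning, we obtain a play with $\bigcap_n A_n\neq\emptyset$, and any $\alpha$ in that intersection is the point we want.

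The key steps, in order, are: (1) set up the payoff. For $J$-almost every $\alpha$ we must produce the data, so it suffices — arguing by contradiction via normality — to show the set of ``bad'' $\alpha$ (those for which no such data exists) is in $J$; if it were $J$-positive, restrict to it and find the data there, contradicting badness. (2) Describe \pone's putative strategy $\tau$ witnessing failure at a $J$-positive set of $\alpha$: \pone always plays a $J$-positive refinement forcing smaller diameter and staying near the current candidate interval, using Observation~\ref{obssmall} at each step to shrink the diameter below $1/n$. (3) Describe \ptwo's response as above, using $\langle i,j\rangle$-saturation and Lemma~\ref{idealsat} to manufacture the $I_n$ and, from the conclusion of Lemma~\ref{satlemma}(2) applied to the saturated pairs, the co-large sets $T_n$ consisting of $\beta$ with $c(\alpha,\beta)=i$. (4) Apply everywhere weak precipitousness: \pone's strategy is not winning, so there is a play consistent with \ptwo's scheme with nonempty intersection; fix $\alpha$ in the intersection. (5) Verify the five clauses: (1)–(3) are built into \ptwo's moves; (4) holds because each $T_n$ was chosen inside the $i$-large part; (5) holds because the $A_n$ containing $\alpha$ have diameter $\to 0$ and are nested-ish around $\alpha$, and $T_n\subseteq A_n$, so every neighborhood of $\alpha$ eventually swallows all the $A_n$ and hence the $T_n$ — this is where Observation~\ref{obssmall} pays off, and we should be slightly careful to keep the small intervals actually containing $\alpha$, which we can arrange since $\alpha$ survives in $A_n$ for all $n$.

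The main obstacle I anticipate is clause (5) combined with the simultaneous bookkeeping: \pone must be made to shrink neighborhoods \emph{around a point not yet determined}, so the ``diagonal'' strategy has to be phrased so that whichever $\alpha$ ends up in $\bigcap_n A_n$ automatically has the $A_n$ (and hence the $T_n$) converging to it topologically. The clean way is to have \pone, at stage $n$, use Observation~\ref{obssmall} to pass to a $J$-positive subset contained in a single open interval $U_n$ of length $<1/n$, with $U_{n+1}\subseteq U_n$ forced by the nesting $A_{n+1}\subseteq A_n$; then $\bigcap U_n$ is a single point or empty, and since $\bigcap A_n\neq\emptyset$ it is exactly $\{\alpha\}$, giving (5) for free. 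A secondary (but routine) point is interleaving the $i$-largeness refinement of Lemma~\ref{satlemma}(2) with the diameter refinement of Observation~\ref{obssmall} without the two interfering — both are ``pass to a $J$-positive subset'' operations and commute harmlessly, so this is only notational. Finally one must check that \pone's combined strategy really is a legal strategy in $\Game(\mathbb{J})$ beyond $J$ (each move is a positive refinement of the previous ideal), which is immediate.
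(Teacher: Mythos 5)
Your overall architecture is the same as the paper's: \pone{} plays a strategy that shrinks diameters via Observation~\ref{obssmall} and exploits $\langle i,j\rangle$-saturation, everywhere weak precipitousness defeats that strategy, and a point surviving in $\bigcap_n A_n$ is the desired $\alpha$. But there are genuine gaps in the execution. The main one is a misuse of the game hypothesis: you prescribe the moves of \ptwo{} (the ``specific countermeasures'') and then invoke the non-existence of a winning strategy for \pone{} to conclude that this particular play has non-empty intersection. Weak precipitousness only guarantees that \emph{some} play consistent with \pone\!\!'s strategy has non-empty intersection; it gives you no control over which responses \ptwo{} makes in that play, and if both players follow deterministic prescriptions there is exactly one play, about which nothing is promised. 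Consequently every piece of data needed for the conclusion must be extractable from \pone\!\!'s strategy together with an \emph{arbitrary} losing run; you cannot have \ptwo{} ``record'' the $T_n$ or ``manufacture'' the $I_n$ as game moves. Relatedly, your $T_n$ is defined during the game as a set of $\beta$ with $c(\alpha,\beta)=i$, but $\alpha$ only becomes available after the game ends; and your route to $T_n\in I_n^*$ via Lemma~\ref{satlemma}(2) conflates the set of first coordinates $\alpha$ that are $i$-large (which is co-large) with the set of second coordinates $\beta$ receiving color $i$ from a fixed $\alpha$ (which is merely positive).

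The paper resolves all of this with a two-set bookkeeping device absent from your sketch: at each stage \pone{} secretly chooses a pair $\langle A_{n+1},B_{n+1}\rangle$ that is $\langle i,j\rangle$-saturated over whatever $J_n$ \ptwo{} just played, with $A_{n+1}\cup B_{n+1}\subseteq A_n$, with $A_{n+1}$ of diameter less than $1/n$, and with every point of $A_{n+1}$ being $i$-large in $B_{n+1}$ over $J_n$; only $A_{n+1}$ is the actual game move. After an arbitrary losing run produces $\alpha\in\bigcap_n A_n$, one sets $T_n=\{\beta\in B_{n+1}:c(\alpha,\beta)=i\}$, which is $J_n$-positive by $i$-largeness, and then chooses $I_n\in\mathbb{J}$ extending $J_n\restr T_n$ (this post hoc choice, not Lemma~\ref{satlemma}(2), is what makes $T_n$ co-large for $I_n$). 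The auxiliary sets $B_{n+1}$ are also what make clause (3) verifiable: for $k>n$, $I_k$ extends $J_n\restr A_{n+1}$ while $I_n$ extends $J_n\restr B_{n+1}$, so Lemma~\ref{idealsat} applied to the saturated pair $\langle A_{n+1},B_{n+1}\rangle$ yields the saturation of $\langle I_k,I_n\rangle$. Keeping only the single chain of $A_n$'s plus a vague ``$i$-large refinement'' does not produce two ideals concentrating on the two halves of a saturated pair, so clause (3) is not actually established in your version. Your treatment of clause (5) and of the reduction to ``every $J$-positive set contains such an $\alpha$'' is fine.
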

\begin{proof}
It suffices to prove that every $J$-positive set $A$ contains such a point $\alpha$.  Given $A$, we define a strategy for \pone in the game $\Game(\mathbb{J})$ played beyond $J$.   The initial move will be to select $A_0=A$.   Given \ptwo\!\!'s move $J_n$, \pone will choose sets $A_{n+1}$ and $B_{n+1}$ such that
\begin{itemize}
    \item $\langle A_{n+1}, B_{n+1}\rangle$ is an $\langle i, j\rangle$-saturated pair over $J_n$,
    \sk
    \item $A_{n+1}\cup B_{n+1}\subseteq A_n$,
    \sk
    \item $A_{n+1}$ is covered by an open interval of length less than $1/n$, and
    \sk
    \item every $\alpha\in A_{n+1}$ is $i$-large in $B_{n+1}$ over $J_n$.
\end{itemize}
This is easily accomplished: given $J_n\in\mathbb{J}$ we can find $J_n$-positive sets $A'$ and $B'$ that are $\langle i, j\rangle$-saturated over $J_n$.  Since the rules of our game guarantee that $A_n$ is in the dual filter $J_n^*$, we may assume that both $A'$ and $B'$ are subsets of $A_n$. We let $B_{n+1}=B'$ and obtain $A_{n+1}$ by applying Observation~\ref{obssmall} to $A'$.

The strategy does not win for \pone, there is a run of the game for which $\bigcap_{n<\omega}A_n\neq\emptyset$, and we claim that any $\alpha$ in this intersection will have the required properties.\footnote{Note that the intersection will be a singleton since $A_n$ has diameter less than $1/n$, so in fact this $\alpha$ is unique.}.

To see why, define $T_n$ to be the $J_n$-positive set $\{\beta\in B_{n+1}: c(\alpha,\beta) = i\}$, and let $I_n$ be an extension of $J_n\restr T_n$ in $\mathbb{J}$. It is easy to check that this choice works, with perhaps everything but requirement (3) immediate from the construction.  To see (3), note that is $k>n$ then $I_k$ and $I_n$ are extensions of $J_n\restr A_{n+1}$ and $J_n\restr B_{n+1}$ in $\mathbb{J}$, and hence the pair of ideals $\langle I_k, I_n\rangle$ is $\langle i, j\rangle$-saturated by Lemma~\ref{idealsat}.
\end{proof}

We now summarize this section with a definition and theorem that provide what we need to push through the Raghavan-Todor\v{c}evi\'c construction.

 \begin{definition}
Let $\mathbb{J}$ be a collection of normal ideals on $\omega_1$.
\begin{enumerate}
    \item Given $J\in\mathbb{J}$, a point $\alpha<\omega_1$ is an $\langle i, j\rangle$-winner for $J$ in $\mathbb{J}$ if the conclusion of Lemma~\ref{winner} holds.
\sk
    \item We say that $\mathbb{J}$ is an $\langle i, j\rangle$-winning family of normal ideals if for each $J\in \mathbb{J}$
    \begin{itemize}
    \sk
        \item $J$-almost every $\alpha<\omega_1$ is an $\langle i, j\rangle$ winner for $J$ in $\mathbb{J}$, and
        \sk
        \item for each $A\in J^+$ there is an ideal in $\mathbb{J}$ extending $J\restr A$.
    \end{itemize}
    \sk
    \item Given a coloring $c:[\omega_1]^2\rightarrow l$, a pair of colors $\langle i, j\rangle$ is a winning pair for~$c$ if there is a non-empty $\langle i, j\rangle$-winning family of normal ideals on $\omega_1$.
\end{enumerate}
 \end{definition}

\begin{theorem}
If there is a  generic elementary embedding with critical point $\omega_1$ that is well-founded out to the image of $\beth_2(\omega_1)^+$, then for any set $X\subseteq\mathbb{R}$ of cardinality~$\aleph_1$ and coloring $c:[X]^2\rightarrow l$ with $l<\omega$, there is a winning pair of colors for $c$.
\end{theorem}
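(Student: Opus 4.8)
The plan is to recognize the statement as a repackaging of Corollary~\ref{saturation} and Lemma~\ref{winner}, after transporting the problem into the Context fixed at the start of the section. First I would fix a bijection $e\colon\omega_1\to X$ and pull back along $e$ both the subspace topology that $X$ inherits from $\mathbb{R}$ and the coloring $c$; this yields a topology $\mathcal{T}$ on $\omega_1$ for which $(\omega_1,\mathcal{T})$ is homeomorphic to a subset of $\mathbb{R}$ together with a coloring $[\omega_1]^2\to l$ --- precisely the data of the Context. Since $e$ is a homeomorphism of $(\omega_1,\mathcal{T})$ onto $X$, a subset $Y\subseteq\omega_1$ has whatever topological or order-theoretic property we care about if and only if $e[Y]$ does, and a winning pair of colors for the pulled-back coloring is a winning pair for $c$; so it suffices to produce a winning pair of colors in the Context.

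Next I would apply Corollary~\ref{saturation}, whose hypothesis is exactly the hypothesis of the present theorem: it produces a pair of colors $\langle i,j\rangle$ and an everywhere weakly precipitous $\langle i,j\rangle$-saturated family $\mathbb{J}$ of normal ideals on $\omega_1$. I claim that $\mathbb{J}$ already witnesses that $\langle i,j\rangle$ is a winning pair. There are two things to verify against the definition of an $\langle i,j\rangle$-winning family. For the extension condition, fix $J\in\mathbb{J}$; since $\mathbb{J}$ is everywhere weakly precipitous it is in particular a suitable parameter, so for each $A\in J^+$ some ideal of $\mathbb{J}$ extends $J\restr A$. For the winner condition, apply Lemma~\ref{winner} to $\mathbb{J}$ and $J$: it says that for $J$-almost every $\alpha<\omega_1$ the conclusion of Lemma~\ref{winner} holds, which is by definition the assertion that $\alpha$ is an $\langle i,j\rangle$-winner for $J$ in $\mathbb{J}$. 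Hence $J$-almost every $\alpha$ is an $\langle i,j\rangle$-winner for $J$ in $\mathbb{J}$. Since a suitable parameter has a $\subseteq$-minimal element, $\mathbb{J}$ is non-empty, and therefore $\langle i,j\rangle$ is a winning pair for $c$.

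At this level there is essentially no obstacle: the real content lives upstream in Theorem~\ref{gamethm} and Lemma~\ref{winner}, and the only care needed here is making the definitions match --- confirming that the family handed to us by Corollary~\ref{saturation} is literally an $\langle i,j\rangle$-winning family, and that the metric features invoked in Observation~\ref{obssmall} and in clause~(5) of Lemma~\ref{winner} (covering $J$-positive sets by arbitrarily short intervals, open neighborhoods eventually swallowing the sets $T_n$) survive transport along $e$. Because $e$ is a homeomorphism onto $X$ with its subspace topology from $\mathbb{R}$, this is automatic, so the proof amounts to assembling the pieces already in place.
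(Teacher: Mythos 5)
Your proposal is correct and follows the same route as the paper: invoke Corollary~\ref{saturation} to obtain an everywhere weakly precipitous $\langle i,j\rangle$-saturated family and then cite Lemma~\ref{winner} to conclude that this family is $\langle i,j\rangle$-winning. The paper's own proof is a two-sentence version of exactly this, with the transport along a bijection $e\colon\omega_1\to X$ handled implicitly by the Context fixed at the start of the section; your extra verifications (non-emptiness, the extension condition, invariance under the identification) are just the details the paper leaves to the reader.
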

\begin{proof}
Immediate based on our work.  From the existence of the embedding we can find an everywhere weakly precipitous family of ideals that is $\langle i, j\rangle$-saturated for some pair of colors $\langle i, j\rangle$, and Lemma~\ref{winner} tells us that this suffices.
\end{proof}

\section{The Raghavan-Todor\v{c}evi\'c Construction}

Now our challenge is to show that we can use a winning pair of colors $\langle i, j\rangle$ to build a copy of the rationals on which our coloring assumes only the values $i$ or $j$. This can be done through a modification of the argument from~\cite{dilipstevo} tailored to take advantage of our hypothesis.

\begin{theorem}[The Raghavan-Todor\v{c}evi\'c Construction]
\label{RT}
Assume $X\subseteq\mathbb{R}$ is of cardinality $\aleph_1$ and $c:[X]^2\rightarrow l$ with $l<\omega$.  If there is a winning pair of colors $\langle i, j\rangle$ for $c$, then  $X$ has a subset $Y$ homeomorphic to $\mathbb{Q}$ on which $c$ assumes only the colors $i$ and $j$.
\end{theorem}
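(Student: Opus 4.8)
The plan is to run the Raghavan--Todor\v{c}evi\'c finite-approximation construction, but with the ``conditions'' being ideals drawn from a fixed non-empty $\langle i,j\rangle$-winning family $\mathbb{J}$ (which exists by the hypothesis that $\langle i,j\rangle$ is a winning pair for $c$) rather than conditions in the stationary tower; the role played in~\cite{dilipstevo} by a generic elementary embedding is played here by the winner-data furnished by Lemma~\ref{winner}. Identifying $X$ with $\omega_1$ carrying a topology making it a subset of $\mathbb{R}$ as in the Context, I would build $Y=\{\alpha_s:s\in\omega^{<\omega}\}$ by recursion on an enumeration of $\omega^{<\omega}$ in which each sequence comes after all its initial segments, attaching to each $s$ an ideal $J_s\in\mathbb{J}$ for which $\alpha_s$ is an $\langle i,j\rangle$-winner for $J_s$ in $\mathbb{J}$, together with the associated sequences $\langle I^s_n:n<\omega\rangle$ and $\langle T^s_n:n<\omega\rangle$ from Lemma~\ref{winner}. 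When a node $s=u^\frown n$ is processed (so its parent $u$ is already done), I would choose $\alpha_s$ inside $T^u_n$: clause~(4) of Lemma~\ref{winner} then forces $c(\alpha_u,\alpha_s)=i$, clause~(5) (since every neighborhood of $\alpha_u$ eventually contains $T^u_n$) forces $\alpha_{u^\frown n}\to\alpha_u$ in $\mathbb{R}$ as $n\to\infty$, and since the winners form a $J$-co-null set for whatever ideal $J\in\mathbb{J}$ we pick below $I^u_n\restr T^u_n$, there is room to choose $J_s$ and a winner $\alpha_s\in T^u_n$ distinct from the countably many (hence null) points chosen so far.

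Since every point of $Y$ is the limit of its children, $Y$ is a countable metrizable space with no isolated points, hence homeomorphic to $\mathbb{Q}$ by the classical theorem of Sierpi\'{n}ski, and $Y\subseteq X$ is automatic. To force $c\restr[Y]^2\subseteq\{i,j\}$ I would strengthen the recursion in two ways. First, for comparable nodes: whenever $u^\frown n\subseteq s$ I would insist $\alpha_s\in T^u_n$. This never conflicts with positivity because $T^u_n$ lies in the dual filter of every ideal $J_t$ with $u^\frown n\subseteq t$ that appears in the construction (each step only passes to a normal extension of a restriction of the previous ideal). Then $\alpha_s\in\bigcup_m T^u_m$ and clause~(4) again gives $c(\alpha_u,\alpha_s)=i$ for every ancestor $u$ of $s$, so all comparable pairs receive color $i$.

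Second, and this is the crux, for incomparable nodes $s,t$ with greatest common node $w$, say $w^\frown a\subseteq s$ and $w^\frown b\subseteq t$ with $a\neq b$: clause~(3) of Lemma~\ref{winner} says $\langle I^w_{\max(a,b)},I^w_{\min(a,b)}\rangle$ is an $\langle i,j\rangle$-saturated pair of normal ideals, and this property is inherited by the normal extensions of the two sides along which $J_s$ and $J_t$ were built. From the definition of a saturated pair of ideals (applied to those extensions) one wants to conclude $c(\alpha_s,\alpha_t)\in\{i,j\}$, and this can be arranged provided the points were chosen generically enough relative to their ideals. For the ``$\forall$'' clause of the saturation, the set of good candidates is co-null for the relevant $I^w_\bullet$, and since any fixed point has only countably many such instances to respect (one for each of its finitely many initial segments $w$ and each competing index), countable completeness of normal ideals makes the intersection of all of them co-null, so these are harmless constraints. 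The ``$\exists$'' clause instead produces, for each already-chosen point, a \emph{positivity} requirement on the later-chosen partner, which must be absorbed into the choice of that partner's ideal by restricting a base ideal of $\mathbb{J}$ to the positive set on which the required color appears --- positivity being exactly what the saturation instance delivers, and the closure condition on a winning family guaranteeing that such a restriction can be taken inside $\mathbb{J}$.

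The main obstacle is precisely this amalgamation: one must interleave in a single recursion the topological demand of Lemma~\ref{winner}(5), the ``stay inside the ancestors' $T_n$'s'' demand, and the demand that a newly chosen point carry the correct (positive or co-null) membership with respect to \emph{all} finitely many previously chosen points, and then verify that the resulting finite list of positivity requirements and countable list of co-nullity requirements on the next point are jointly realizable by some ideal in $\mathbb{J}$. Carrying out this bookkeeping --- the analogue for $\mathbb{J}$ of the density arguments used in~\cite{dilipstevo} with the stationary tower --- is the technical heart of the proof; I expect the remaining work (fixing a precise enumeration of tasks and checking that each step leaves the constraint system consistent) to be largely routine but notation-heavy.
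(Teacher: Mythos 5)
Your plan is essentially the paper's proof: the same tree-indexed recursion in which comparable pairs are handled by the winner data of Lemma~\ref{winner} (clauses (4) and (5) giving the color $i$ and the convergence), and incomparable pairs by the $\langle i, j\rangle$-saturation of the pair of ideals attached at their meet, with the ``$\forall$'' side absorbed into a countable co-null intersection over the current leaves and the ``$\exists$'' side absorbed by restricting the partner's ideal to the positive set of correctly colored points (the closure property of $\mathbb{J}$ keeping this inside the family). The amalgamation you defer as the technical heart is exactly what the paper's invariants (a)--(d) and selection conditions (e)--(f) carry out, and it goes through as you predict.
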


\begin{proof}
We identify $X$ and $\omega_1$ as usual and let $\mathbb{J}$ be an $\langle i, j\rangle$-winning family of normal ideals.  We will build $Y$ using a construction with $\omega$ stages, associating a point of $X$ to each element of $\pre{\omega}{<\omega}$.
 associating a point $\alpha_\sigma$ of $X$ to each $\sigma\in\pre{\omega}{<\omega}$.  We will use the lexicographic ordering of    $\pre{\omega}{<\omega}$ to organize things, so recall that if $\sigma$ and $\tau$ are in $\pre{\omega}{<\omega}$ then $\sigma<_\lex\tau$ means that $\tau$ is a proper extension of $\sigma$ or $\sigma(i)<\tau(i)$ for the least $i$ where the sequences differ.

To see how this will work, fix a one-to-one enumeration $\langle\sigma_n:n<\omega\rangle$ of $\pre{\omega}{<\omega}$ chosen so that a sequence will not be enumerated until after all of its proper initial segments have already appeared, and we will let $\alpha_n$ denote the point chosen for the sake of $\sigma_n$.   We will ensure that these points are all distinct, and aim for the following two goals:

\medskip
\noindent{\sf Goal 1:}  For $k<n$, $c(\alpha_k, \alpha_n)$ will equal $i$ if $\sigma_k<_\lex \sigma_n$, and  and $c(\alpha_k, \alpha_n)$ will equal $j$ if $\sigma_n<_\lex \sigma_k$.\footnote{In other words, once we have picked $\alpha_k$ then for $n>k$ we promise to choose $\alpha_n$ so that if it is (lexicographically) to the right of $\alpha_k$ then $c(\alpha_k, \alpha_n)=i$, and if it is to the left then $c(\alpha_k,\alpha_n)=j$.}

\medskip

\noindent{\sf Goal 2:} If $\{k_m:m<\omega\}$ are the indices corresponding to the immediate successors of $\sigma_n$ in $\pre{\omega}{<\omega}$, then $\{\alpha_{k_m}:m<\omega\}$ converges to $\alpha_n$.
\medskip

\noindent  Note that this will be sufficient, as $Y=\{\alpha_n:n<\omega\}$ will be a countable dense-in-itself subset of $X$ on which $c$ takes on at most two values.

For $n<\omega$, we let $Q_n$ to consist of the sequences $\sigma_k$ for $k<n$ as well as all of their immediate successors in $\pre{\omega}{<\omega}$.  With this arrangement, we can say the following:
\begin{itemize}
    \item $Q_0$ is the empty set and $\sigma_0$ is the empty sequence. For $n>0$, $Q_n$ is a downward closed subtree of $\pre{\omega}{<\omega}$ of finite height.
    \sk
    \item If $\sigma=\sigma_k$ for $k<n$, then $Q_n$ will contain $\sigma^\smallfrown\langle m\rangle$ for all $m<\omega$.  If $\sigma\in Q_n$ is not of the form $\sigma_k$ for $k<n$, then no proper extension of $\sigma$ is in $Q_n$, and $\sigma$ is of the form $\sigma_k^\smallfrown\langle m\rangle$ for some $k<n$.
    \sk
    \item The sequence $\sigma_n$ is an element of $Q_n$, but no proper extension of $\sigma_n$ is in $Q_n$.
\end{itemize}

Thus, the nodes of $Q_n$ are divided into {\em leaves $L_n$} and {\em branching nodes $B_n$} with the usual meaning, and $B_n$ will be exactly the set $\{\sigma_k:k<n\}$. As noted above, the sequence $\sigma_n$ will be in $Q_n$, and since our enumeration is one-to-one we know that $\sigma_n$ will be a leaf of $Q_n$.
At stage $n$ of the construction, we will be choosing the point $\alpha_n$ assigned to $\sigma_n$,  and we assume that our construction has provided us with the following circumstances:

\begin{enumerate}[label=(\alph*)]
    \item The set $\{\sigma_k:k<n\}$ of sequences enumerated so far lists the branching nodes $B_n$ of $Q_n$, and for each $k<n$ a point $\alpha_k$ has been assigned to $\sigma_k$. In addition, each leaf $\tau$ in $L_n$ has been assigned an ideal $J(\tau, n)$ from $\mathbb{J}$, as well as a set $T(\tau, n)$ from the dual filter $J^*(\tau,n)$.
    \sk
    \item If $\tau$ is a leaf of $Q_n$ then for any $\beta\in T(\tau, n)$ and $k<n$,
    \begin{equation*}
    c(\alpha_k, \beta)=
    \begin{cases}
    i &\text{if $\sigma_k<_\lex\tau$,}\\
    j &\text{if $\tau<_\lex\sigma_k$}.
    \end{cases}
\end{equation*}
    \item If $\sigma$ and $\tau$ are distinct leaves in $Q_n$ with $\sigma<_\lex \tau$, then the pair of ideals $\langle J(\sigma, n), J(\tau, n)\rangle$ is a $\langle i, j\rangle$-saturated pair of normal ideals.
    \sk
    \item For $k<n$,  the sequence of sets $\langle T(\sigma_k^\smallfrown\langle m\rangle, n):\sigma_k^\smallfrown\langle m\rangle\in L_n\rangle$ attached to the successors of $\sigma_k$ in $L_n$ converges to~$\alpha_k$.
\end{enumerate}

Getting the construction started is easy, as our requirements give us freedom to choose $J(\sigma_0, 0)$ to be any ideal in $\mathbb{J}$ and let $T(\sigma_0, 0)=\omega_1$.  For larger $n$, our enumeration will hand us the sequence $\sigma_n$ (a leaf of $Q_n$) and we will choose the corresponding point $\alpha_n$ from the current crop of candidates $T(\sigma_n, n)$.  Because of assumption (b), this will maintain Goal~1 for $k<n$. There are some other restrictions on $\alpha_n$, though: we need to make sure that we will be able to find  $J(\tau,n+1)$ and $T(\tau, n+1)$ for each leaf $\tau$ of $Q_{n+1}$. Note that such $\tau$ are either leaves of $Q_n$ (for which $J(\tau, n)$ and $T(\tau, n)$ have already been defined), or else newly created leaves of the form $\sigma_n^\smallfrown\langle m\rangle$ for some $m<\omega$.

To make sure we can do this, we want to choose $\alpha_n\in T(\sigma_n, n)$ that satisfies the following:
\begin{enumerate}[label=(\alph*),resume]
\item If $\tau\neq\sigma_n$ is another leaf of $Q_n$, then
\sk
\begin{itemize}
\item $\sigma_n<_\lex\tau\Longrightarrow\{\beta\in T(\tau,n):c(\alpha_n, \beta)=i\}$ is $J(\tau, n)$-positive.
\sk
    \item  $\tau<_\lex \sigma_n\Longrightarrow\{\beta\in T(\tau,n):c(\alpha_n,\beta)=j\}$ is $J(\tau, n)$-positive, and
\end{itemize}
\sk
\item $\alpha_n$ is an $\langle i, j\rangle$ winner for $J(\sigma_n, n)$ in $\mathbb{J}$.
\end{enumerate}
It will be (e) that allows us to find $J(\tau, n+1)$ and $T(\tau, n+1)$ for those leaves of $Q_{n+1}$ that come from $Q_n$, and it will be (f) that allows us to define the objects for leaves that arise as successors of $\sigma_n$.  Thus, the following claim is key.

\begin{claim}
$J(\sigma_n, n)$-almost every $\alpha\in T(\sigma_n, n)$ satisfies both (e) and (f).
\end{claim}
\begin{proof}
Condition (f) is a consequence of Lemma~\ref{winner}.  To see that (e) holds, we need to use our assumption (c) and the $\langle i, j\rangle$-saturation of the relevant ideals.  This tells us that given another leaf $\tau$ of $Q_n$, then
\begin{align}
 \sigma_n<_\lex\tau&\Longrightarrow   (\forall^{J(\sigma_n, n)}\alpha\in T(\sigma_n, n))(\exists^{J(\tau,n)}\beta\in T(\tau, n))\left[c(\alpha,\beta)=i\right]\\
\intertext{and}
 \tau<_\lex\sigma_n&\Longrightarrow   (\forall^{J(\sigma_n, n)}\alpha\in T(\sigma_n, n))(\exists^{J(\tau,n)}\beta\in T(\tau, n))\left[c(\alpha,\beta)=j\right]
\end{align}
Informally it says that for any other leaf $\tau$ of $Q_n$, almost every element of $T(\sigma_n, n)$ is ``suitably connected'' (in the sense that the correct sets are large) to $T(\tau, n)$.  Since $J(\sigma, n)$ is countably complete, it follows that almost every element of $T(\sigma, n)$ is suitably connected to $T(\tau, n)$ for {\em ALL} other leaves $\tau$ of $Q_n$, and so claim follows.
\end{proof}

Thus, we should choose $\alpha_n\in T(\sigma_n, n)$ satisfying (e) and (f) and now we show how to define $T(\tau, n+1)$ and $J(\tau, n+1)$ for the leaves of $Q_{n+1}$ so that our conditions (a)-(d) will be maintained. Given such a $\tau$ that comes from $Q_n$, we use (f) to define $T(\tau,n+1)$:
\begin{align}
\label{T1}
 \sigma_n<_\lex\tau&\Longrightarrow T(\tau, n+1)=\{\beta\in T(\tau, n):c(\alpha_n, \beta)=i\}\\
\intertext{and}
\label{T2}
 \tau<_\lex\sigma_n&\Longrightarrow T(\tau, n+1)=\{\beta\in T(\tau, n): c(\alpha_n,\beta)=j\}.
\end{align}
In either case, we know that $T(\tau, n+1)$ will be $J(\tau, n)$-positive because $\alpha_n$ was suitably connected to $T(\tau, n)$, and given this we are able to choose $J(\tau, n+1)$ to be any ideal in $\mathbb{J}$ that extends~$J(\tau, n)\restr T(\tau, n+1)$.

It remains to assign sets and ideals to the sequences $\sigma_n^\smallfrown\langle m\rangle$ that will appear as new leaves in $Q_{n+1}$. This is where we use our assumption that $\alpha_n$ is an $\langle i, j\rangle$-winner: if $\langle I_m:m<\omega\rangle$ and $\langle T_m:m<\omega\rangle$ are as in the conclusion of Lemma~\ref{weapon}, then we define
\begin{align}
    J(\sigma^\smallfrown\langle m\rangle, n+1)&=I_m\\
    \intertext{ and }
    T(\sigma^\smallfrown\langle m\rangle, n+1)&=T_m.
\end{align}

To finish, we need to check that (a) through (d) hold with $n$ replaced by $n+1$. Condition (a) is immediate.  For (b), we again use the fact that the leaves of $Q_{n+1}$ either come from $Q_n$, or were added as an immediate successor of $\sigma_n$. Given a leaf $\tau$ of the first sort, we know $T(\tau, n+1)$ is a subset of $T(\tau, n)$, and if $\tau$ is of the second sort then we know $T(\tau, n+1)$ is a subset of $T(\sigma_n, n)$. This is enough to establish (b) for the case where $k<n$.  If $\tau$ is an immediate successor of $\sigma_n$, then we know (b) holds for $k=n$ as this was part of the conclusion of Lemma~\ref{winner}.  If on the other hand $\tau$ is a leaf from $Q_n$, then (b) holds because of how we chose $T(\tau,n+1)$ back in (\ref{T1}) and (\ref{T2}).   Condition (c) holds because our construction guarantees $\langle J(\sigma, n+1), J(\tau,n+1)\rangle$ extends an $\langle i, j\rangle$-saturated pair of ideals.  Finally, (e) holds for $\sigma_n$ because this is part of being an $\langle i, j\rangle$-winner, and it continues to hold for $k<n$ because the required convergence is unchanged when we shrink the sets as we pass from $n$ to $n+1$. Thus, the construction can be maintained through stage $n+1$, and as noted above, $Y=\{\alpha_n:n<\omega\}$ will be as required.
\end{proof}

As a corollary, we have our theorem promised in the introduction:

\begin{mainthm}
If there is a Ramsey cardinal (or more generally, a generic elementary embedding with critical point $\omega_1$ that is well-founded out to the image of $\beth_2(\omega_1)^+)$)  then for any uncountable set of reals $X$ and any $c:[X]^2\rightarrow l$ with $l<\omega$, there is a set $Y\subseteq X$ homeomorphic to $\mathbb{Q}$ for which the range of $c\restr [Y]^2$ contains at most two colors.
\end{mainthm}

\end{document}